\newtheorem{theorem}{Theorem}[section]
\newtheorem{example}[theorem]{Example}
\newtheorem{lemma}[theorem]{Lemma}
\newtheorem{corollary}[theorem]{Corollary}
\newtheorem{proposition}[theorem]{Proposition}
\newtheorem{definition}[theorem]{Definition}
\newtheorem{assumption}[theorem]{Assumption}
\title{Global convergence of the subgradient method for robust signal recovery}
\author{\large Zesheng Cai\thanks{\url{u3612774@connect.hku.hk}, Department of Mathematics, The University of Hong Kong, Hong Kong.} \and  Lexiao Lai\thanks{\url{lai.lexiao@hku.hk}, Department of Mathematics, The University of Hong Kong, Hong Kong.} \and Tiansheng Li\thanks{\url{u3612747@connect.hku.hk}, Department of Mathematics, The University of Hong Kong, Hong Kong.}}
\date{}
\begin{document}

\maketitle
\vspace*{-5mm}
 \begin{center}
    \textbf{Abstract}
    \end{center}
    \vspace*{-4mm}
 \begin{adjustwidth}{0.2in}{0.2in}
~~~~We study the subgradient method for factorized robust signal recovery problems, including robust PCA, robust phase retrieval, and robust matrix sensing. The resulting objectives are nonsmooth and nonconvex, and can have unbounded sublevel sets, so standard analyses based on descent and coercivity do not apply. For locally Lipschitz semialgebraic objectives, we develop a convergence framework that replaces these requirements with a boundedness condition on continuous-time subgradient trajectories. Under this condition and sufficiently small step sizes of order $1/k$, we show that iterates of the subgradient method remain bounded and the full sequence converges to a critical point. We then verify the required boundedness property for the three robust objectives by adapting existing trajectory analyses, assuming a mild nondegeneracy condition in the matrix sensing case. Finally, for rank-one symmetric robust PCA, we prove that for almost every initialization, the method cannot converge to spurious critical points; consequently, under the same step-size regime, it converges to a global minimum.

\end{adjustwidth}
\section{Introduction}\label{sec:intro}
Robust signal recovery aims to reconstruct an underlying vector or matrix from observations that may contain gross outliers. Such corruptions arise in modern data pipelines due to sensor failures, adversarial perturbations, or heavy tailed noise, and they are classically modeled and analyzed in robust statistics through contamination frameworks and influence function arguments \cite{huber1964robust,hampel1986robust}. In many inverse problems, the outliers are structured: only a small fraction of samples are corrupted, but their magnitudes can be arbitrarily large. A convenient abstraction is
\[
y_i=\phi_i(\theta^\star)+s_i,\qquad i=1,\dots,N,
\]
where $\phi_i$'s represent measurement operators, and $s$ is sparse and captures the gross errors. This perspective motivates $\ell_1$ type losses, which underlie canonical robust estimators based on absolute deviations \cite{koenker1978regression} and connect naturally to sparsity based recovery guarantees in high dimensions \cite{donoho2006compressed,candes2006stable}. We focus on robust recovery models of this form, and we study the behavior of simple first-order methods on the resulting nonsmooth nonconvex objectives. Consider the following examples.

\begin{example}[Robust principal component analysis]\label{exp:rpca}
Given a matrix $M \in \mathbb{R}^{m \times n}$, robust principal component analysis (robust PCA) aims to decompose $M$ into the sum of a low-rank matrix and a sparse matrix. Such models arise in, e.g., recommendation systems \cite{bennett2007netflix}, video surveillance \cite{lrslibrary2015,bouwmans2019}, and face recognition \cite{basri2003lambertian}. In the sparse-corruption abstraction, one assumes
\[
    M = L^\star + S^\star,
\]
where $L^\star$ is low rank and $S^\star$ is sparse. Using a factorization model $L = X Y^\top$ with $X \in \mathbb{R}^{m \times r}$ and $Y \in \mathbb{R}^{n \times r}$ for a rank estimate $r$, a standard robust formulation fits $L$ to $M$ by an entrywise $\ell_1$ loss, leading to
\begin{equation}\label{eq:rpca}
    \min_{X \in \mathbb{R}^{m \times r},\; Y \in \mathbb{R}^{n \times r}} f(X,Y) := \|XY^\top - M\|_1,
\end{equation}
which encourages the residual $XY^\top - M$ to be sparse. This objective has appeared in \cite{ke2005robust,meng2013cyclic,eriksson2010efficient,gillis2018complexity}.
\end{example}

\begin{example}[Robust phase retrieval]\label{exp:phase}
Given measurement vectors $a_1,\dots,a_N \in \mathbb{R}^n$ and observations $b_1,\dots,b_N \in \mathbb{R}$, robust phase retrieval seeks to recover an unknown signal $x^\star \in \mathbb{R}^n$ from quadratic measurements corrupted by sparse outliers. Concretely, we assume
\[
    b_i = |\langle a_i, x^\star \rangle|^2 + s_i, \qquad i = 1,\dots,N,
\]
where $s=(s_i)_{i=1}^N$ is sparse. A standard robust approach minimizes an empirical $\ell_1$ loss, yielding
\begin{equation}\label{eq:phase}
    \min_{x \in \mathbb{R}^n} f(x) 
    := \frac{1}{2N} \sum_{i=1}^N \bigl||\langle a_i, x \rangle|^2 - b_i\bigr|.
\end{equation}
This problem has been studied in \cite{duchi2019solving,davis2020nonsmooth,eldar2014phase}.
\end{example}

\begin{example}[Robust matrix sensing]\label{exp:sensing}
Let $A_1,\dots,A_N \in \mathbb{R}^{m \times n}$ be measurement matrices and let $b_1,\dots,b_N \in \mathbb{R}$ be observations generated from a low-rank matrix $L^\star \in \mathbb{R}^{m \times n}$ with sparse corruptions:
\[
    b_i = \langle A_i, L^\star \rangle + s_i, \qquad i = 1,\dots,N,
\]
where $s=(s_i)_{i=1}^N$ is sparse and $\langle \cdot,\cdot \rangle$ denotes the Frobenius inner product. Using a factorization model $L = X Y^\top$ with $X \in \mathbb{R}^{m \times r}$ and $Y \in \mathbb{R}^{n \times r}$ for a rank parameter $r$, a robust formulation again uses an $\ell_1$ loss:
\begin{equation}\label{eq:sensing}
    \min_{X \in \mathbb{R}^{m \times r},\; Y \in \mathbb{R}^{n \times r}} 
    f(X,Y) := \frac{1}{N} \sum_{i=1}^N \bigl|\langle A_i, X Y^\top \rangle - b_i\bigr|.
\end{equation}
Closely related formulations have appeared in recent studies \cite{li2020nonconvex,charisopoulos2021low,ma2023global}.
\end{example}

All three examples use $\ell_1$ losses to promote sparsity and robustness to sparse gross errors \cite{candes2007sparsity}. They also employ Burer--Monteiro-type factorizations \cite{burer2003} to represent low-rank structure through factors. The resulting objectives in \eqref{eq:rpca}--\eqref{eq:sensing} are therefore nonsmooth, due to absolute values, and nonconvex, due to bilinear parametrizations. Some of these problems admit convex relaxations with recovery guarantees. For example, robust PCA can be formulated as a convex program \cite{candes2011} and solved using augmented Lagrangian and related splitting methods \cite{yuan2013sparse,lin2011}. Nevertheless, the factorized nonconvex formulations are often preferred at scale. They reduce storage by representing rank-$r$ matrices with $O((m+n)r)$ variables, and they avoid repeated large-scale singular value decompositions that can dominate the computational cost of first-order methods when applied to the convex models.

Despite these challenges, the factorized robust objectives are unconstrained and naturally amenable to simple first-order algorithms. In this work, we focus on the subgradient method \cite{shor1962application}, whose iterates satisfy
\begin{equation}\label{eq:update}
    x_{k+1}\in x_k - \alpha_k \partial f(x_k),\quad\forall k\in \mathbb N,
\end{equation}
where $x_0\in \mathbb R^n$ and the step sizes $(\alpha_k)_{k\in \mathbb N}$ are given. Here $\partial f:\mathbb R^n\rightrightarrows \mathbb R^n$ denotes the Clarke subdifferential \cite{clarke1990optimization}, which we define in Section \ref{sec:bounded_traj}. We refer to any realization $(x_k)_{k\in \mathbb N}$ satisfying \eqref{eq:update} as a \emph{subgradient sequence}. Figure \ref{fig:subgradient-all} shows that, on generic instances of Examples \ref{exp:rpca}--\ref{exp:sensing}, subgradient sequences with step sizes on the order of $1/k$ often converge from random initialization. These observations motivate the following question:

\begin{center}
For the objectives in Examples \ref{exp:rpca}--\ref{exp:sensing}, does the subgradient method from an \emph{arbitrary} initialization converge to a critical point under standard step sizes? If so, which critical points does it select?
\end{center}

\begin{figure}[!ht]
    \centering
    \includegraphics[width=\textwidth]{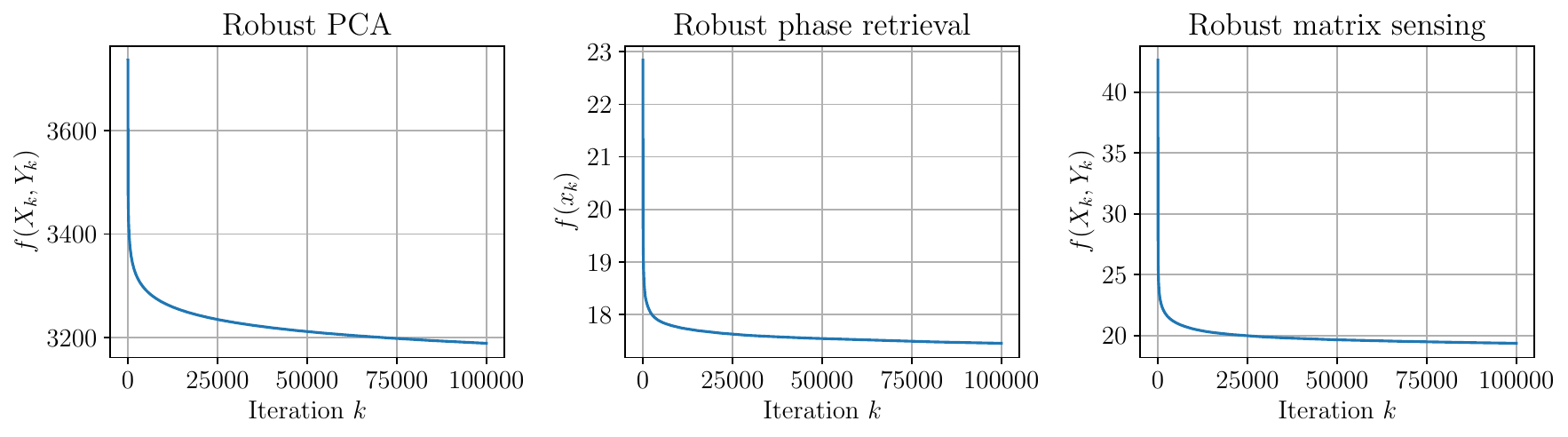}\\[0.6em]
    \includegraphics[width=\textwidth]{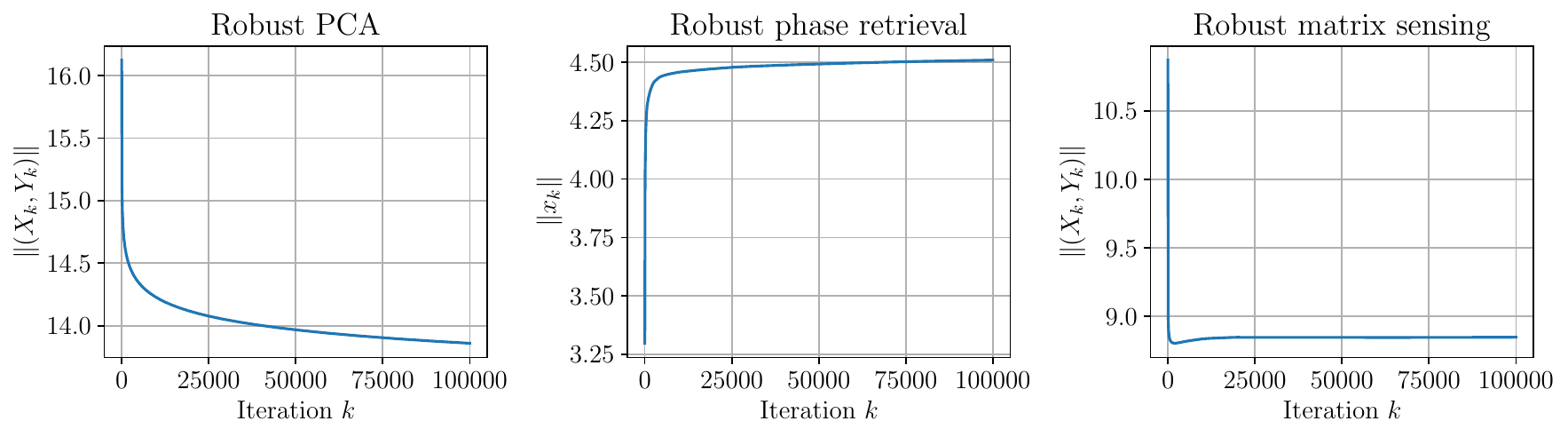}
    \caption{We apply the subgradient method with step size of order $1/k$ to minimize objectives that arise in robust signal recovery problems: the top panel shows the evolution of the objective values, and the bottom panel shows the evolution of the norms of the iterates.}
    \label{fig:subgradient-all}
\end{figure}

 We next review existing convergence guarantees for subgradient methods on the robust recovery problems introduced above. The available theory is predominantly local. For robust phase retrieval \cite{davis2020nonsmooth} and for robust matrix sensing with regularizers \cite{li2020nonconvex}, the subgradient method converges when the initialization lies close enough to a target solution. Such regions are not known to be reached from arbitrary starting points, and spectral initializations that aim to enforce them can be computationally expensive. In contrast, global convergence results for subgradient methods typically require additional structure. For weakly convex objectives, global convergence guarantees are available in \cite{davis2019stochastic2}, and related results for composite models appear in \cite{duchi2018stochastic}. The objectives in Examples \ref{exp:rpca}--\ref{exp:sensing} are weakly convex, but these guarantees do not apply in our setting. Indeed, the analyses in \cite{davis2019stochastic2,duchi2018stochastic} assume a uniform bound on the subgradient norms along the iterates, which effectively requires global Lipschitz control. Such a uniform bound is not available for our factorized formulations, since they are not globally Lipschitz and subgradient norms can grow without bound on unbounded sublevel sets. Locally Lipschitz objectives can instead be treated under bounded-sublevel-set assumptions by using step sizes normalized by subgradient norms \cite{li2024revisiting}. In fact, bounded sublevel sets imply boundedness of subgradient sequences \cite{josz2023globalstability,bolte2025inexact}, for tame functions in the sense of \cite{ioffe2009} (e.g., semialgebraic functions; see Definition \ref{def:semialgebraic}). One then obtains subsequential convergence of the iterates to critical points \cite{davis2020stochastic}. More recently, convergence of the full sequence has been established for step sizes of order $1/k$ \cite{lai2025diameter}. Without coercivity or any a priori boundedness of the iterates, the most general guarantee we are aware of are \cite[Theorems 3.9 \& 3.10]{josz2024proximal}, which establishes convergence to near approximate critical points under carefully chosen summable step sizes. Convergence without bounded iterates has been shown for algorithms enjoying descent-type properties, either in the smooth setting \cite{josz2023global,josz2023convergence} or for proximal algorithms \cite[Theorem 3.11]{josz2024proximal}. The key assumption in these works is boundedness of the associated continuous-time subgradient trajectories; we adopt the same assumption here (Assumption \ref{assumption:bounded}). The proofs then control discrete iterates by tracking the continuous-time trajectories and bounding their length via the total decrease in function values. This descent property, however, is not available for the vanilla subgradient method, whose iterates need not produce monotonically decreasing objective values.

Beyond convergence, it is also crucial to understand \emph{which} critical points the method converges to. Because the objectives \eqref{eq:rpca}--\eqref{eq:sensing} are nonconvex, they typically possess critical points that do not correspond to the desired signal. A comprehensive characterization of the global nonsmooth landscapes in Examples \ref{exp:rpca}--\ref{exp:sensing} remains largely open; to the best of our knowledge, the existing results only concern certain rank-one robust PCA models \cite{joszneurips2018,josz2022nonsmooth,guan2024ell_1,josz2025nonsmooth}. In these settings it is shown that there are no spurious local minima, i.e., every local minimum is globally optimal. This, however, does not preclude the existence of nonglobal critical points that may attract first-order methods, and therefore does not by itself ensure that the subgradient method avoids undesirable solutions. Indeed, as pointed out in \cite[Section 4.1]{guan2024ell_1}, some nonglobal critical points in these models fail to satisfy the hypotheses underpinning current “escape” results for subgradient methods \cite{bianchi2023stochastic,josz2024sufficient,davis2025active}, which rely on partial smoothness of the objective near the critical point \cite{lewis2002active}.

The contribution of this paper is twofold. First, we provide global convergence guarantees for the subgradient method on the factorized robust recovery objectives in Examples \ref{exp:rpca}--\ref{exp:sensing}. We develop a general convergence framework for locally Lipschitz semialgebraic objectives that may have unbounded sublevel sets. Under the assumption that all continuous-time subgradient trajectories are bounded (Assumption \ref{assumption:bounded}), we prove that subgradient sequences with sufficiently small step sizes of order $1/k$ are bounded (Theorem \ref{thm:bounded_diameter}). Together with \cite{lai2025diameter}, this yields convergence of the full subgradient sequence to a critical point (Corollary \ref{convergent corollary}). Building on existing analyses of subgradient trajectories for related models \cite{josz2023certifying,fougereux2024global}, we verify that the objectives in Examples \ref{exp:rpca}--\ref{exp:sensing} satisfy Assumption \ref{assumption:bounded}.

Second, we address critical point selection in a setting where the optimization landscape can be characterized. We study the symmetric variant of the rank-one robust PCA objective \eqref{eq:sym_obj}, which admits spurious critical points. When the data vector has no zero entries, we show that for sufficiently small nonsummable step sizes, subgradient sequences initialized outside a null set cannot converge to these spurious critical points (Theorem \ref{thm:sym_avoid}). Combined with Corollary \ref{convergent corollary}, this implies that with small step sizes of order $1/k$ the subgradient method converges to a global minimum for almost every initialization.

\subsection{Notations}
We recall standard notations used throughout. Let $\mathbb{N}:=\{0,1,2,\dots\}$.
On $\mathbb{R}^n$ we write $\|\cdot\|$ for the norm induced by the Euclidean inner product $\langle\cdot,\cdot\rangle$. For $a\in\mathbb{R}^n$ and $r>0$, let $B(a,r)$ denote the closed ball of center $a$ and radius $r$. If $A\subset\mathbb{R}^n$, then $\overline{A}$ is its closure. We also use the Minkowski enlargement $B(A,r):=A+B(0,r)$. The sign function $\mathrm{sgn}:\mathbb{R}\to\{-1,0,1\}$ returns $-1$ for negative, $1$ for positive, and $0$ at zero.

\section{From bounded subgradient trajectories to converging subgradient sequences}\label{sec:bounded_traj}
Let \( f : \mathbb{R}^n \to \mathbb{R} \) be a locally Lipschitz function. The Clarke subdifferential \cite{clarke1975,clarke1990optimization} is the set-valued mapping \(\partial f : \mathbb{R}^n \rightrightarrows \mathbb{R}^n\) defined for all \( x \in \mathbb{R}^n \) by
\[
\partial f(x) := \left\{ s \in \mathbb{R}^n : f^\circ(x, h) \geq \langle s, h \rangle, \forall h \in \mathbb{R}^n \right\},
\]
where $f^\circ:\mathbb R^n\times \mathbb R^n \to \mathbb R$ denotes the generalized directional derivative defined by
\[
f^\circ(x, h) := \limsup_{\substack{y \to x \\ t \searrow 0}} \frac{f(y + th) - f(y)}{t}.
\]
We say that \( x \in \mathbb{R}^n \) is \textit{critical} if \( 0 \in \partial f (x) \).

To analyze the subgradient method, we will repeatedly relate subgradient sequences to their continuous-time analogue, namely subgradient trajectories. This approach has been used in several recent works on subgradient-type algorithms \cite{davis2020stochastic,bolte2022long,josz2023global,josz2023globalstability}. We say that an absolutely continuous function $x:[0,\infty)\to \mathbb R^n$ is a \emph{subgradient trajectory} of $f$ if it satisfies
\[
x'(t) \in -\partial f(x(t)), \quad \text{for a.e. } t>0.
\]
If $f$ is locally Lipschitz and lower bounded, then it admits subgradient trajectories starting from an arbitrary initial point $x_0$ \cite[Proposition 2.11]{josz2023certifying} (see also \cite[Proposition 2.3]{santambrogio2017euclidean}). In particular, any absolutely continuous $x:[0,T]\to \mathbb R^n$ satisfying the above inclusion over $(0,T)$ can be extended to a solution on $[0,\infty)$. Without additional structure, however, subgradient trajectories can be highly irregular \cite{daniilidis2020pathological}. To obtain useful geometric and analytic properties, we restrict attention to \emph{semialgebraic} objectives \cite{tarski1951decision}, defined as follows.

\begin{definition}
\label{def:semialgebraic}
A subset \( S \) of \( \mathbb{R}^n \) is semialgebraic if it is a finite union of sets of the form
\[
\{x \in \mathbb{R}^n : p_i(x) = 0, \, i = 1, \ldots, k; \, p_i(x) > 0, \, i = k + 1, \ldots, m\}
\]
where \( p_1, \ldots, p_m \) are polynomials defined from \( \mathbb{R}^n \) to \( \mathbb{R} \).
A function \( f : \mathbb{R}^n \to \mathbb{R} \) is semialgebraic if its graph \( \{(x, t) \in \mathbb{R}^{n+1} : f(x) = t\} \) is a semialgebraic set.
\end{definition}

The objectives in Examples \ref{exp:rpca}--\ref{exp:sensing} are all semialgebraic. More broadly, many objectives in data science are semialgebraic whenever they can be expressed as finite compositions of polynomials, $\max/\min$, fractions, and square roots. A key property we will leverage is the chain rule for subgradient trajectories of locally Lipschitz semialgebraic functions \cite{drusvyatskiy2015curves}, recalled next.

\begin{lemma}[{\cite[Lemma 5.2]{davis2020stochastic}}\label{lemma:chain}]
Let $f:\mathbb R^n\to \mathbb R$ be locally Lipschitz semialgebraic. If $x:[0,\infty)\to \mathbb R^n$ is a subgradient trajectory of $f$, then $f\circ x$ is differentiable for almost everywhere on $[0,\infty)$ with
\[
(f\circ x)'(t) = - \|x'(t)\|^2\quad\text{and}\quad\|x'(t)\| = d(0,\partial f(x(t))).
\]
\end{lemma}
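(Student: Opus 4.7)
The plan is to reduce the two identities to a single structural fact: the chain rule for Clarke subgradients along absolutely continuous curves. For a locally Lipschitz semialgebraic $f$, the graph admits a Whitney stratification into finitely many $C^1$ manifolds on which $f$ is smooth, and on each such stratum the Clarke subdifferential is related to the classical gradient through the tangent space. Combining this stratification with the Morse--Sard theorem for tame functions yields the projection formula
\[
(f\circ x)'(t) = \langle v, x'(t)\rangle \quad \text{for every } v\in \partial f(x(t)), \quad \text{for a.e. } t\geq 0,
\]
for \emph{every} absolutely continuous curve $x$. This is the main input; the plan is to invoke it from the tame chain rule literature (e.g., Drusvyatskiy--Ioffe--Lewis, or \cite{davis2020stochastic}) rather than reprove it.

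Given the chain rule, the rest is essentially algebraic. First, I would fix $t$ in the full-measure set where both $x'(t)$ exists, $x'(t)\in -\partial f(x(t))$, and the identity above holds. Choosing $v = -x'(t)\in \partial f(x(t))$ gives $(f\circ x)'(t) = \langle -x'(t), x'(t)\rangle = -\|x'(t)\|^2$, which is the first identity. For the second identity, I would apply the chain rule to an arbitrary $v\in \partial f(x(t))$, obtaining
\[
-\|x'(t)\|^2 = (f\circ x)'(t) = \langle v, x'(t)\rangle \geq -\|v\|\,\|x'(t)\|
\]
by Cauchy--Schwarz. If $\|x'(t)\|>0$ this yields $\|x'(t)\|\leq \|v\|$; combined with the fact that $-x'(t)$ is itself an element of $\partial f(x(t))$, this shows $-x'(t)$ is the minimum-norm element of $\partial f(x(t))$, so $\|x'(t)\| = d(0,\partial f(x(t)))$. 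If $\|x'(t)\| = 0$, then $0\in \partial f(x(t))$ and the identity holds trivially.

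The only substantive obstacle is establishing the projection formula in the first paragraph: the Clarke subdifferential is large and outer-semicontinuous, so without additional structure the identity $(f\circ x)'(t) = \langle v, x'(t)\rangle$ can fail pointwise even for Lipschitz curves \cite{daniilidis2020pathological}. Semialgebraicity rescues the argument by forcing $x(t)$ to sit in a single stratum of $f$ at almost every time and by making the motion of $x$ tangential to that stratum, so that all elements of $\partial f(x(t))$ project onto the same gradient of the smooth restriction. Once this stratification-based chain rule is taken as a black box, the remaining steps are the two short derivations above.
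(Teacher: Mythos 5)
Your proposal is correct and follows the standard route: the paper itself does not prove this lemma but simply cites \cite[Lemma 5.2]{davis2020stochastic}, and your argument reconstructs exactly the proof given there — black-boxing the tame chain rule $(f\circ x)'(t)=\langle v,x'(t)\rangle$ for all $v\in\partial f(x(t))$, then deriving the first identity by taking $v=-x'(t)$ and the second via Cauchy--Schwarz plus the membership $-x'(t)\in\partial f(x(t))$. The only cosmetic inaccuracy is attributing the projection formula partly to the Morse--Sard theorem; in the cited sources it rests on the Whitney stratification (projection formula for the Clarke subdifferential on strata) alone, but since you invoke the chain rule as a cited black box this does not affect correctness.
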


We now verify that, under mild assumptions on the problem data, subgradient trajectories are bounded for the robust signal recovery objectives introduced in Section \ref{sec:intro}. This property will later serve as the main standing assumption for our convergence analysis of the subgradient method.

For robust PCA, it was shown in \cite{josz2023certifying} that boundedness holds without any assumption on the data matrix $M$; we restate the result.

\begin{proposition}[{\cite[Proposition 4.5]{josz2023certifying}}]\label{prop:rpca}
Every subgradient trajectory of the robust PCA objective \eqref{eq:rpca} is bounded.
\end{proposition}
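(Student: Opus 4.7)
The plan is to exploit two features of the subgradient dynamics: monotone decrease of $f$ along trajectories (from Lemma \ref{lemma:chain}) and an exact conservation law for $X^\top X - Y^\top Y$. The first controls $\|XY^\top\|_F$ through the $\ell_1$ objective, while the second neutralizes the rescaling $(X,Y)\mapsto(cX,c^{-1}Y)$ that renders the sublevel sets of $f$ unbounded. A short trace inequality then combines these two bounds into a uniform estimate on $\|X\|_F^2 + \|Y\|_F^2$.

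For the first ingredient, Lemma \ref{lemma:chain} gives $(f\circ(X,Y))'(t)=-\|(X',Y')(t)\|^2\leq 0$, so $\|X(t)Y(t)^\top-M\|_1\leq f(X(0),Y(0))$, and the triangle inequality together with the elementary bound $\|\cdot\|_F\leq\|\cdot\|_1$ yields $\|X(t)Y(t)^\top\|_F\leq R_1$ uniformly in $t$, where $R_1$ depends only on $M$ and the initial point. For the second ingredient, write $f=g\circ\phi$ with $g(Z):=\|Z-M\|_1$ convex and $\phi(X,Y):=XY^\top$ smooth; the Clarke chain rule for a convex function composed with a smooth map gives
\[
\partial f(X,Y) = \bigl\{(HY,\,H^\top X) : H\in\partial g(XY^\top)\bigr\}.
\]
Hence for almost every $t$ there exists $H(t)\in\partial g(X(t)Y(t)^\top)$ with $X'(t)=-H(t)Y(t)$ and $Y'(t)=-H(t)^\top X(t)$, and a one-line cancellation shows $\frac{d}{dt}(X^\top X-Y^\top Y)=0$, so $X(t)^\top X(t)-Y(t)^\top Y(t)\equiv C:=X(0)^\top X(0)-Y(0)^\top Y(0)$.

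Set $S(t):=X(t)^\top X(t)+Y(t)^\top Y(t)$, a symmetric positive semidefinite $r\times r$ matrix. Expanding $\|S\|_F^2$ and $\|C\|_F^2$ and using cyclicity of the trace yields the identity
\[
\|S(t)\|_F^2-\|C\|_F^2 = 4\,\mathrm{tr}(X^\top X\,Y^\top Y) = 4\|X(t)Y(t)^\top\|_F^2 \leq 4R_1^2.
\]
Since the eigenvalues of $S(t)$ are nonnegative, Cauchy--Schwarz gives $\mathrm{tr}(S(t))^2\leq r\,\|S(t)\|_F^2$, and hence
\[
\|X(t)\|_F^2+\|Y(t)\|_F^2 = \mathrm{tr}(S(t)) \leq \sqrt{r\bigl(4R_1^2+\|C\|_F^2\bigr)},
\]
which is the desired uniform bound on the trajectory.

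The main obstacle is conceptual: spotting that the identity $\|S\|_F^2-\|C\|_F^2 = 4\|XY^\top\|_F^2$ converts a bound on the product into a bound on the factors, once the conservation law has eliminated the scaling degree of freedom. The technical ingredients---the Clarke chain rule, measurable selection of $H(t)$, and absolute continuity of the trajectory---are standard for locally Lipschitz semialgebraic composites of this simple form.
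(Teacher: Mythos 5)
Your proof is correct and follows essentially the same route as the paper: the paper defers this statement to \cite[Proposition 4.5]{josz2023certifying}, but its own proof of the matrix-sensing analogue (Proposition \ref{prop:sensing} in Appendix \ref{sec:proof_sensing}) uses exactly your two ingredients---monotone decrease of $f$ along the trajectory to bound $\|XY^\top\|_F$, and conservation of $X^\top X - Y^\top Y$---combined via the identity $\|X^\top X\|_F^2+\|Y^\top Y\|_F^2=\|X^\top X-Y^\top Y\|_F^2+2\|XY^\top\|_F^2$, of which your computation with $S=X^\top X+Y^\top Y$ is a cosmetic variant. No gaps.
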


The same conclusion holds for robust phase retrieval. Its proof largely follows the boundedness of gradient trajectories of its smooth counterpart \cite{fougereux2024global}, and is thus deferred to Appendix \ref{sec:proof_phase}.

\begin{proposition}\label{prop:phase}
Every subgradient trajectory of the robust phase retrieval objective \eqref{eq:phase} is bounded.
\end{proposition}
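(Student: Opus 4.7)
The plan is to adapt the boundedness argument of \cite{fougereux2024global} for the smooth phase retrieval objective, replacing gradients with Clarke subgradients. The key observation is that every element of $\partial f$ lies in $V := \mathrm{span}(a_1, \ldots, a_N)$, so subgradient trajectories are confined to an affine subspace parallel to $V$ on which $f$ is coercive.

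First, I would compute the Clarke subdifferential. Each summand $|\langle a_i, x\rangle^2 - b_i|$ is the composition of the absolute value with the smooth map $x \mapsto \langle a_i, x\rangle^2 - b_i$, so by the Clarke chain rule its subdifferential equals $\{2\sigma_i \langle a_i, x\rangle a_i : \sigma_i \in \partial|\cdot|(\langle a_i, x\rangle^2 - b_i)\}$. Applying the Clarke sum rule then yields $\partial f(x) \subseteq V$ for every $x \in \mathbb{R}^n$. Since $x'(t) \in -\partial f(x(t))$ for almost every $t$, integrating shows $x(t) - x(0) \in V$ for all $t \geq 0$; in particular the projection $P_{V^\perp} x(t) = P_{V^\perp} x(0)$ is constant in $t$.

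Next, I would establish coercivity of the restriction $\bar f := f|_V$. Because the $a_i$'s span $V$, the operator $A := \sum_i a_i a_i^\top$ restricted to $V$ has smallest eigenvalue $\lambda > 0$, so $\sum_i \langle a_i, y\rangle^2 \geq \lambda \|y\|^2$ for every $y \in V$. Using the termwise bound $|u_i| \geq u_i$ with $u_i = \langle a_i, y\rangle^2 - b_i$,
\[
\bar f(y) = \frac{1}{2N}\sum_{i=1}^N \bigl|\langle a_i, y\rangle^2 - b_i\bigr| \;\geq\; \frac{1}{2N}\left(\lambda \|y\|^2 - \sum_{i=1}^N b_i\right),
\]
which tends to $+\infty$ as $\|y\| \to \infty$ within $V$.

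To conclude, Lemma \ref{lemma:chain} shows that $t \mapsto f(x(t))$ is nonincreasing; since $f$ is invariant under translations in $V^\perp$, this gives $\bar f(P_V x(t)) \leq f(x(0))$ for all $t \geq 0$. By the coercivity just established, the sublevel set $\{y \in V : \bar f(y) \leq f(x(0))\}$ is bounded, hence so is $\{P_V x(t) : t \geq 0\}$. Combined with $\|P_{V^\perp} x(t)\| = \|P_{V^\perp} x(0)\|$, this yields boundedness of $\|x(t)\|$. The main technical step is the subdifferential inclusion $\partial f(x) \subseteq V$; everything else is either a direct consequence of Lemma \ref{lemma:chain} or a short linear-algebraic computation, and no additional assumption on the data $(a_i, b_i)$ is required.
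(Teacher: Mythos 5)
Your proposal is correct and follows essentially the same route as the paper's proof: identify that $\partial f(x)\subset V=\mathrm{span}(a_1,\dots,a_N)$ so the $V^\perp$-component of the trajectory is constant, then use the descent property from Lemma \ref{lemma:chain} together with the quadratic lower bound $f(x)\ge \frac{1}{2N}\bigl(x^\top(\sum_i a_ia_i^\top)x-\sum_i b_i\bigr)$ and positive definiteness of $\sum_i a_ia_i^\top$ on $V$ to bound the $V$-component. The only cosmetic difference is that you phrase the last step as coercivity of $f|_V$ while the paper bounds $x_V(t)$ directly from the sublevel-set inequality; the arguments are identical in substance.
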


For robust matrix sensing (Example \ref{exp:sensing}), we require a standard nondegeneracy condition on the sensing matrices. Specifically, we impose \eqref{eq:rip_lower}, following the same type of assumptions used in \cite[Proposition 4.4]{josz2023certifying} for the smooth counterpart of \eqref{eq:sensing}. Condition \eqref{eq:rip_lower} holds, for instance, when the sensing matrices satisfy the restricted isometry property, which is common in the matrix sensing literature \cite{recht2010guaranteed}. The proof can be found in Appendix \ref{sec:proof_sensing}, and follows the same overall strategy as \cite[Proposition 4.4]{josz2023certifying}. Let $\|\cdot\|_F$ be the Frobenius norm of matrices.

\begin{proposition}\label{prop:sensing}
Let $f:\mathbb R^{m\times r}\times \mathbb R^{n\times r}\to \mathbb R$ be a robust matrix sensing objective \eqref{eq:sensing}, with sensing matrices $A_1,\ldots,A_N\in \mathbb R^{m\times n}$ such that for some $c>0$,
\begin{equation}\label{eq:rip_lower}
\frac{1}{N}\sum_{i = 1}^N \langle A_i,B\rangle^2 \ge c\|B\|_F^2
\end{equation}
for all $B\in \mathbb R^{m\times n}$ with $\mathrm{rank}(B) \le r$. Then every subgradient trajectory of $f$ is bounded.
\end{proposition}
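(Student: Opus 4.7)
The plan is to exploit two properties of any subgradient trajectory $(X(\cdot), Y(\cdot))$ of $f$: a uniform bound on the product $\|X(t)Y(t)^\top\|_F$ coming from monotonicity of $f$ and \eqref{eq:rip_lower}, together with conservation of the \emph{balancing} quantity $X(t)^\top X(t) - Y(t)^\top Y(t)$. These two facts will be shown to force the individual factors to stay bounded, via a compactness-and-contradiction argument. The strategy mirrors \cite[Proposition 4.4]{josz2023certifying} for the smooth squared-loss counterpart, but must be adapted to handle the nonsmooth $\ell_1$ objective.

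First, Lemma \ref{lemma:chain} gives $f(X(t),Y(t)) \le f(X(0),Y(0))$ for all $t$, so the $\ell_1$ norm of the residual vector $(\langle A_i, X(t)Y(t)^\top \rangle - b_i)_{i=1}^N$ is uniformly bounded in $t$. Since $\|\cdot\|_2 \le \|\cdot\|_1$ on $\mathbb{R}^N$ and by the triangle inequality, $(\sum_i \langle A_i, X(t)Y(t)^\top \rangle^2)^{1/2}$ is also bounded. Because $X(t)Y(t)^\top$ has rank at most $r$, applying \eqref{eq:rip_lower} yields a constant $R > 0$ such that $\|X(t)Y(t)^\top\|_F \le R$ for all $t$. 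For the balancing invariance, I would write $f = g \circ \Phi$ with $\Phi(X,Y) = XY^\top$ smooth and $g(L) = \frac{1}{N}\sum_i |\langle A_i, L\rangle - b_i|$ convex. The Clarke chain rule under a smooth inner map gives $\partial f(X,Y) \subset \{(MY, M^\top X) : M \in \partial g(XY^\top)\}$, so along the trajectory there exists a measurable selection $M(t) \in \partial g(X(t)Y(t)^\top)$ with $X'(t) = -M(t)Y(t)$ and $Y'(t) = -M(t)^\top X(t)$ almost everywhere. A direct computation then shows $\tfrac{d}{dt}(X^\top X - Y^\top Y) = 0$, so $X(t)^\top X(t) - Y(t)^\top Y(t) \equiv G$ for the constant matrix $G = X(0)^\top X(0) - Y(0)^\top Y(0)$; taking traces gives $\|X(t)\|_F^2 - \|Y(t)\|_F^2 = \operatorname{tr}(G)$.

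Suppose for contradiction that $\|X(t)\|_F$ is unbounded, so that $\|Y(t)\|_F$ is as well. Along a sequence $t_n \to \infty$, the unit-normalized factors $\tilde X_n := X(t_n)/\|X(t_n)\|_F$ and $\tilde Y_n := Y(t_n)/\|Y(t_n)\|_F$ may be assumed to converge to unit-Frobenius-norm limits $\tilde X$ and $\tilde Y$. The bound $\|X(t_n)Y(t_n)^\top\|_F \le R$ then forces $\tilde X \tilde Y^\top = 0$, while dividing the balancing identity by $\|X(t_n)\|_F^2$ and using $\|Y(t_n)\|_F^2/\|X(t_n)\|_F^2 \to 1$ yields $\tilde X^\top \tilde X = \tilde Y^\top \tilde Y$. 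The latter equality means the row spaces of $\tilde X$ and $\tilde Y$ coincide in $\mathbb{R}^r$, while $\tilde X \tilde Y^\top = 0$ makes those row spaces mutually orthogonal; hence both are $\{0\}$, forcing $\tilde X = 0$ and contradicting $\|\tilde X\|_F = 1$. I expect the main technical hurdle to be the balancing invariance in the nonsmooth setting, namely the careful use of the smooth-inner Clarke chain rule to produce the measurable subgradient selection; once that is in hand, the rest reduces to a clean limiting argument.
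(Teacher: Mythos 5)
Your proposal is correct. The first two ingredients are exactly the paper's: monotonicity of $f$ along the trajectory (Lemma \ref{lemma:chain}) plus \eqref{eq:rip_lower} gives a uniform bound on $\|X(t)Y(t)^\top\|_F$, and the structured form of $\partial f$ coming from the chain rule with the smooth inner map $(X,Y)\mapsto XY^\top$ gives conservation of $X^\top X - Y^\top Y$ (the paper invokes the same Clarke chain rule and the same computation $\phi'(t)=0$). Where you diverge is the final step. The paper concludes quantitatively via the algebraic identity
\[
\|X^\top X\|_F^2 + \|Y^\top Y\|_F^2 \;=\; \|X^\top X - Y^\top Y\|_F^2 + 2\,\|XY^\top\|_F^2,
\]
which, combined with $\|X\|_F^4 \le r\,\|X^\top X\|_F^2$, yields an explicit bound on $\|X(t)\|_F^4+\|Y(t)\|_F^4$ in terms of the initial data. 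You instead argue by contradiction: normalize along a sequence where $\|X(t_n)\|_F\to\infty$, extract limits $\tilde X,\tilde Y$, and deduce $\tilde X\tilde Y^\top=0$ together with $\tilde X^\top\tilde X=\tilde Y^\top\tilde Y$, so the common row space in $\mathbb R^r$ is orthogonal to itself and hence trivial, contradicting $\|\tilde X\|_F=1$. This limiting argument is sound (the identification of $\operatorname{range}(A^\top A)$ with the row space of $A$ is what makes the last step work), but it is purely qualitative, whereas the paper's identity delivers the same conclusion in two lines with explicit constants. Both routes are valid; you might note that your compactness step is really a soft version of the paper's identity, since expanding $\|\tilde X^\top\tilde X-\tilde Y^\top\tilde Y\|_F^2$ is exactly how one sees the two conditions are incompatible with unit norm.
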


Motivated by Propositions \ref{prop:rpca}--\ref{prop:sensing}, we will establish convergence guarantees for the subgradient method under the following assumption. We identify matrix variables with vectors via vectorization.

\begin{assumption}\label{assumption:bounded}
Let $f:\mathbb{R}^n\to\mathbb{R}$ be locally Lipschitz and semialgebraic. Assume that every subgradient trajectory of $f$ is bounded.
\end{assumption}

\begin{sloppypar}
For semialgebraic functions, it is classical—going back to the pioneering works of \L{}ojasiewicz \cite{lojasiewicz1959,lojasiewicz1982trajectoires} and Kurdyka \cite{kurdyka1998gradients}—that bounded subgradient trajectories must have finite length. Moreover, this length bound can be chosen uniformly over all trajectories initialized in a bounded set \cite[Lemma 1]{josz2023global}, a fact we will use to control the behavior of subgradient sequences.
\end{sloppypar}
\begin{lemma}[{\cite[Lemma 1]{josz2023global}}]\label{lemma:continuous}
Let Assumption \ref{assumption:bounded} hold. Then, for any bounded set $X_0\subset \mathbb{R}^n$, we have
\begin{equation}\label{eq:sigma_traj}
\sigma(X_0) := \sup\left\{\int_{0}^{\infty}\|x'(t)\|\,dt \;\middle|\; \begin{array}{l}
x:[0,\infty)\to\mathbb{R}^n \text{ is absolutely continuous},\\
x'(t)\in -\partial f(x(t)) \text{ for a.e.\ } t>0,\\
x(0)\in X_0
\end{array}\right\}<\infty.
\end{equation}
\end{lemma}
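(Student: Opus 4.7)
The plan is to combine a compactness argument with the Kurdyka--\L ojasiewicz (KL) inequality for semialgebraic locally Lipschitz functions. The overall idea is to first show that all trajectories starting from $X_0$ live in a common compact set $K$, and then to obtain a uniform length bound on $K$ from a single desingularizing function.

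I would begin by arguing that
\[
\mathcal{T}(X_0) := \bigcup\{x([0,\infty)) : x \text{ is a subgradient trajectory of } f,\ x(0)\in X_0\}
\]
is bounded. Assume for contradiction that there exist initial points $x_k(0)\in X_0$ and times $t_k$ with $\|x_k(t_k)\|\to\infty$. By compactness of $\overline{X_0}$, pass to a subsequence with $x_k(0)\to \bar x_0\in\overline{X_0}$. On any finite time interval, a Filippov-type closedness property for the differential inclusion $x'\in -\partial f(x)$, combined with an Arzel\`a--Ascoli argument and a diagonal extraction, should yield a subgradient trajectory $\bar x$ starting at $\bar x_0$ whose range is unbounded, contradicting Assumption \ref{assumption:bounded}. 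This is the main technical obstacle, since subgradient trajectories need not be unique and the solution map to the inclusion is only upper semicontinuous in the initial datum, so some care is required to pass unbounded behavior through to the limit.

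Once $\mathcal{T}(X_0)\subset K$ for some compact $K$, I would invoke the nonsmooth KL inequality for semialgebraic functions (Bolte--Daniilidis--Lewis--Shiota). Since $f$ is semialgebraic, its critical values intersected with $f(K)$ form a finite set, so a finite covering argument produces a single continuous concave semialgebraic desingularizer $\psi$ with $\psi(0)=0$ and $\psi'>0$ on $(0,\eta)$, such that
\[
\psi'(f(x)-f^*)\cdot d(0,\partial f(x)) \ge 1
\]
for all $x\in K$ lying above the relevant critical value $f^*$ (and within the KL neighborhood), uniformly in $x$.

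Finally, I would apply the chain rule of Lemma \ref{lemma:chain} along any subgradient trajectory $x$ contained in $K$. Using $(f\circ x)'(t)=-\|x'(t)\|^2$ and $\|x'(t)\|=d(0,\partial f(x(t)))$, I get
\[
\frac{d}{dt}\psi(f(x(t))-f^*) = -\psi'(f(x(t))-f^*)\,\|x'(t)\|^2 \le -\|x'(t)\|.
\]
Integrating from $0$ to $\infty$ and using $\psi\ge 0$ yields
\[
\int_0^\infty \|x'(t)\|\,dt \le \psi\bigl(f(x(0))-f^*\bigr),
\]
which is uniformly bounded over $x(0)\in X_0$ by continuity of $f$ on the compact set $\overline{X_0}$. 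Taking the supremum over all such trajectories gives $\sigma(X_0)<\infty$.
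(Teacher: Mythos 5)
The paper does not prove Lemma \ref{lemma:continuous}; it imports it from \cite[Lemma 1]{josz2023global}. Your Steps 2--3 (gluing finitely many semialgebraic desingularizers on a compact set and integrating via the chain rule of Lemma \ref{lemma:chain}) are the standard Kurdyka length estimate and correspond to the second half of that argument. The genuine gap is Step 1.

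Your contradiction argument for the boundedness of $\mathcal{T}(X_0)$ does not go through. Take trajectories $x_k$ with $\|x_k(t_k)\|\to\infty$. If the $t_k$ stay bounded, Arzel\`a--Ascoli cannot be applied on $[0,\sup_k t_k]$ in the first place, since equiboundedness is exactly what fails there (and absent a lower bound on $f$, the identity $\int_0^T\|x_k'\|^2\,dt=f(x_k(0))-f(x_k(T))$ does not give a uniform $L^2$ control on the speeds either). If $t_k\to\infty$ --- which is the case that makes the lemma hard --- the diagonal extraction yields a limit trajectory $\bar x$ only as a locally uniform limit on compact time intervals, and unboundedness of $x_k$ at late times $t_k\to\infty$ does not pass to $\bar x$: the limit can be a perfectly bounded trajectory (think of trajectories that linger longer and longer near a critical point before wandering off to infinity; the limit simply converges to that critical point). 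So Assumption \ref{assumption:bounded} is not contradicted, and the reachable set is not shown to be bounded. This is not a technicality: boundedness of $\mathcal{T}(X_0)$ is essentially equivalent to the lemma itself, since $\mathcal{T}(X_0)\subset B(X_0,\sigma(X_0))$, so it cannot be obtained by soft compactness before the KL machinery enters. The proof in \cite{josz2023global} --- mirrored by the proof of Theorem \ref{thm:bounded_diameter} in this paper --- establishes boundedness and the length bound simultaneously by a recursion over the finitely many critical values \cite{bolte2007clarke}: away from critical points the speed $d(0,\partial f)$ is bounded below so the length accumulated there is controlled by the decrease of $f$, near a critical point the desingularizer bounds the additional length until the value drops strictly below that critical level, and one restarts from a set with strictly fewer critical values above it. You would need to replace Step 1 by an argument of this type.
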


We can now state the main result of this section. It asserts that, with step sizes of order $1/k$, subgradient sequences initialized in a bounded set have uniformly bounded diameters. Recall that the diameter of a subset $A$ of $\mathbb R^n$ is defined by $\mathrm{diam}(A):= \sup\{\|a-b\|:a,b\in A\}$.

\begin{theorem}\label{thm:bounded_diameter}
Let Assumption \ref{assumption:bounded} hold. Then for every bounded $X_0\subset\mathbb{R}^n$ there exist $\bar\alpha>0$ and $c>0$ such that for any subgradient sequence $(x_k)_{k\in\mathbb{N}}$ with $x_0\in X_0$ and any nonincreasing step sizes $(\alpha_k)_{k\in\mathbb{N}}$ with $\alpha_k\le \bar{\alpha}/(k+1)$, 
one has
\(
\mathrm{diam}\bigl((x_k)_{k\in \mathbb N}\bigr)\le c
\).
\end{theorem}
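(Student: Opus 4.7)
The plan is to compare the discrete iterates with continuous-time subgradient trajectories and to exploit the uniform length bound from Lemma \ref{lemma:continuous}. Under Assumption \ref{assumption:bounded}, a natural target region containing all iterates is a neighborhood of $X_0$ whose radius is dictated by $\sigma$.

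First, I would set up the geometry. Choose $Y := \overline{B(X_0, \sigma(X_0) + 2)}$, which is bounded, and let $L$ be a Lipschitz constant of $f$ on a compact neighborhood of $Y$, so that any element of $\partial f$ over $Y$ has norm at most $L$. By definition of $\sigma$, every continuous-time subgradient trajectory starting in $X_0$ stays in $\overline{B(X_0,\sigma(X_0))} \subset Y$. The goal is to show that for sufficiently small $\bar\alpha$, every subgradient sequence with $x_0\in X_0$ and $\alpha_k\le\bar\alpha/(k+1)$ also stays in $Y$, which yields the diameter bound with $c := \mathrm{diam}(Y)$.

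Second, I would introduce the interpolation. Let $t_k=\sum_{j<k}\alpha_j$ and let $\tilde x:[0,\infty)\to\mathbb{R}^n$ be piecewise linear with $\tilde x(t_k) = x_k$, so that $\dot{\tilde x}(t)=-s_{k(t)}$ for an active subgradient $s_{k(t)}\in\partial f(x_{k(t)})$ on each interval $(t_k,t_{k+1})$. The core step is a shadowing argument: as long as the iterates lie in $Y$, over any finite window $[s,s+T]$ the curve $\tilde x$ stays uniformly close to a true subgradient trajectory $\phi$ with $\phi(s)=\tilde x(s)$. The standard route is a Gronwall-type estimate on $\|\tilde x-\phi\|$ using upper semicontinuity of $\partial f$ and the observation that the "frozen" subgradient $s_{k(t)}$ differs from some selection of $\partial f(\tilde x(t))$ only by terms controlled by $\alpha_{k(t)}L$. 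The continuous trajectory $\phi$ in turn moves at most $\sigma(Y)$ in total over $[s,\infty)$.

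The main obstacle is that $\alpha_k \le \bar\alpha/(k+1)$ is not summable, so the interpolation clock $t_k$ diverges, and a single finite-window shadowing does not cover all iterations. I would close the gap by exploiting two features of the $1/k$ regime. First, $\sum_k \alpha_k^2 \le \bar\alpha^2 \pi^2/6$ is small, which is the standard quantity appearing in the cumulative discretization error of asymptotic-pseudotrajectory estimates and controls the drift between $\tilde x$ and $\phi$ across all windows. Second, since every subgradient trajectory starting in $Y$ has finite length bounded uniformly by $\sigma(Y)$, the tail length $\int_T^\infty \|\dot\phi\|\,dt$ vanishes as $T\to\infty$. Partitioning time into consecutive windows and iterating the shadowing lemma, a bootstrap argument then shows that as long as the iterates stay in $Y$ they can only displace themselves from $X_0$ by at most $\sigma(X_0)+O(\bar\alpha)$; for $\bar\alpha$ small enough this is strictly less than $\sigma(X_0)+2$, so the iterates never reach the boundary of $Y$, establishing the diameter bound.
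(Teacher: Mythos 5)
Your overall strategy---shadow the iterates by continuous-time trajectories and use the uniform length bound $\sigma$---is the right starting point, and it is how the paper's proof begins. But the bootstrap step that closes your argument has a genuine gap. The shadowing estimate only keeps the interpolated process close to a \emph{single} true trajectory over a window of finite length $T$ (the Gronwall factor $e^{LT}$ forces $T$ to be finite), after which you must re-anchor at the current iterate $\tilde x(s+T)$ and start a \emph{new} trajectory. That new trajectory has its own length budget, which is a priori as large as $\sigma(Y)$, not a vanishing tail: the statement ``$\int_T^\infty\|\dot\phi\|\,dt\to 0$'' applies to one fixed $\phi$, whereas your construction concatenates infinitely many re-anchored trajectories, each contributing up to $\sigma$ of the current region. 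Nothing in your argument makes these contributions summable, so the claimed displacement bound $\sigma(X_0)+O(\bar\alpha)$ does not follow; after $j$ windows you can only conclude a displacement of order $j\cdot\sigma(Y)$. For gradient descent this is repaired by descent of $f$ plus the \L{}ojasiewicz inequality (the trajectory lengths are controlled by telescoping function-value gaps), but the paper explicitly notes that the vanilla subgradient method is not a descent method, which is precisely why that route is unavailable here. The smallness of $\sum_k\alpha_k^2$ controls the per-window discretization error but does nothing to control the accumulation across windows.

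The paper's proof supplies the missing mechanism with two ingredients you do not have. First, the finite-time tracking is used only once per ``stage'': one shows that within time $T=2\sigma(X_0)/\mu$ the trajectory (hence some iterate $x_{k^*}$) must enter a $\delta$-neighborhood of the critical set $C$ of $\overline{\Phi_0}$, because otherwise $\|x'(t)\|\ge\mu$ would force the trajectory length to exceed $\sigma(X_0)$. Second, once near $C$, the Kurdyka--\L{}ojasiewicz-type \emph{diameter} estimate of Lemma~\ref{Lemma:diameter} (from \cite{lai2025diameter}) takes over: it bounds $\mathrm{diam}(x_{[k^*,K]})$ without any descent assumption, and shows that if the sequence ever escapes $B(x^*,\epsilon)$ it must do so with $f$ strictly below $\max_C f$ by a fixed amount $\psi^{-1}(\delta_0\epsilon/8)$. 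This strict drop is what replaces descent: the escaped sequence lands in a set $X_1$ whose trajectories see a strictly smaller maximal critical value, and since semialgebraic $f$ has finitely many critical values (definable Morse--Sard), the recursion $X_0\to X_1\to\cdots$ terminates after finitely many stages, each contributing a bounded amount to the diameter. You would need to incorporate both the hand-off to a local KL-diameter estimate near critical points and the induction over critical values for your argument to go through.
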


The proof of Theorem \ref{thm:bounded_diameter} is given in Section \ref{sec:proof_diameter}. It relies on recursive arguments similar to those used to prove convergence of the gradient method for smooth functions \cite[Theorem 1]{josz2023global}. In the smooth setting, \cite[Theorem 1]{josz2023global} establishes the stronger property that the gradient-method sequence has uniformly bounded lengths; this property need not hold for subgradient sequences. To address this difficulty, we aim instead for a uniform bound on the diameter of subgradient sequences. This becomes possible by combining the recursive proof with local diameter estimates for subgradient sequences \cite[Theorem 1.1]{lai2025diameter}. 

By \cite[Corollary 1.2]{lai2025diameter}, bounded subgradient sequences with step sizes of order $1/k$ must converge. Combining this fact with Theorem \ref{thm:bounded_diameter} yields the following convergence guarantee.

\begin{corollary}\label{convergent corollary}
Let Assumption \ref{assumption:bounded} hold. Then for every bounded set $X_0 \subset \mathbb{R}^n$, there exists $\bar{\alpha} > 0$ such that for any $\alpha\in (0,\bar{\alpha}]$, it holds that any subgradient sequence $(x_k)_{k \in \mathbb{N}}$ with $x_0 \in X_0$ and step sizes $\alpha_k= \alpha/(k+1)$ converges to a critical point of $f$.
\end{corollary}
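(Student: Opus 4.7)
The plan is to chain Theorem \ref{thm:bounded_diameter} with the cited convergence result \cite[Corollary 1.2]{lai2025diameter}, so the argument is essentially a two-step deduction rather than a new proof.

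First I would apply Theorem \ref{thm:bounded_diameter} to the bounded set $X_0$ to extract the threshold $\bar\alpha>0$ and the diameter constant $c>0$ asserted there. For any $\alpha\in(0,\bar\alpha]$, the step sizes $\alpha_k:=\alpha/(k+1)$ are visibly nonincreasing in $k$ and satisfy $\alpha_k\le\bar\alpha/(k+1)$ for every $k\in\mathbb N$, so they lie in the admissible range of Theorem \ref{thm:bounded_diameter}. Therefore, for any subgradient sequence $(x_k)_{k\in\mathbb N}$ with $x_0\in X_0$ produced by these step sizes, we obtain $\mathrm{diam}\bigl((x_k)_{k\in\mathbb N}\bigr)\le c$. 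In particular $(x_k)_{k\in\mathbb N}\subset B(x_0,c)$ and the sequence is bounded.

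Second, I would invoke \cite[Corollary 1.2]{lai2025diameter}, which guarantees that any bounded subgradient sequence of a locally Lipschitz semialgebraic function with step sizes of order $1/k$ converges to a critical point. Since $f$ is locally Lipschitz and semialgebraic by Assumption \ref{assumption:bounded}, and since we just verified that $(x_k)_{k\in\mathbb N}$ is bounded with $\alpha_k=\alpha/(k+1)$, the hypotheses of that corollary are met. Hence $(x_k)_{k\in\mathbb N}$ converges to a critical point of $f$, proving the claim.

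There is no substantive obstacle here: the diameter bound from Theorem \ref{thm:bounded_diameter} does all the heavy lifting by turning Assumption \ref{assumption:bounded} into a global boundedness statement for discrete iterates, after which \cite[Corollary 1.2]{lai2025diameter} supplies full-sequence convergence off the shelf. The only care needed is to check that the specific step-size schedule $\alpha_k=\alpha/(k+1)$ qualifies as \emph{nonincreasing} and is pointwise bounded by the admissible schedule $\bar\alpha/(k+1)$, which is immediate from $\alpha\le\bar\alpha$.
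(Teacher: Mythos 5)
Your proposal is correct and is exactly the argument the paper intends: Theorem \ref{thm:bounded_diameter} gives boundedness of the iterates for the nonincreasing schedule $\alpha_k=\alpha/(k+1)\le\bar\alpha/(k+1)$, and \cite[Corollary 1.2]{lai2025diameter} then upgrades boundedness to full-sequence convergence to a critical point. No gaps.
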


Thanks to Propositions \ref{prop:rpca}--\ref{prop:sensing}, Corollary \ref{convergent corollary} applies directly to certify convergence of the subgradient method to critical points for the robust signal recovery problems in Examples \ref{exp:rpca}--\ref{exp:sensing}.

\section{Avoidance of spurious critical points in rank-one symmetric robust PCA}\label{sec:avoidance}
In this section, we study when the subgradient method converges to global minima of robust recovery problems from almost every initialization. This issue is practically important because the objectives \eqref{eq:rpca}--\eqref{eq:sensing} admit spurious critical points, namely critical points that are not global minima. Corollary \ref{convergent corollary} guarantees convergence of subgradient sequences, but it does not exclude convergence to such spurious critical points. To obtain an initialization-independent global guarantee for any optimization algorithm, one would need a complete characterization of the optimization landscape. For nonconvex nonsmooth objectives such as \eqref{eq:rpca}--\eqref{eq:sensing}, this characterization is generally difficult. To the best of our knowledge, the only available landscape results are for rank-one robust PCA \cite{josz2022nonsmooth} and its symmetric variant \cite{joszneurips2018} (see also \cite{guan2024ell_1,josz2025nonsmooth}). Here, we focus on the problem of  rank-one symmetric robust PCA \eqref{eq:sym_obj} and provide conditions under which subgradient sequences converge to global minima for almost every initial point. Such results cannot hold for arbitrary initialization, since the subgradient method cannot avoid spurious critical points when initialized exactly at them.

Throughout this section, we obey the following rule for subscripts: given a vector $x\in \mathbb R^n$ and indices $i,j\in \{1,\ldots,n\}$, we will use $x_i$ and $x_j$ to denote the $i$-th and $j$-th component of $x$ respectively. The subscript $k$ is reserved to present elements in a sequence (e.g., $(x_k)_{k\in \mathbb N}$). The $i$-th component of the vector $x_k$ is denoted by $x_{k,i}$. Consider the objective of rank-one symmetric robust PCA \(f:\mathbb{R}^n \to \mathbb{R}\), defined by
\begin{equation}\label{eq:sym_obj}
    f(x) := \frac{1}{2} \|xx^\top - uu^\top\|_1 = \frac{1}{2} \sum_{i = 1}^n \sum_{j = 1}^n |x_i x_j - u_i u_j|,
\end{equation}
where \(u \in \mathbb{R}^n\) is given. Let $I:= \{1,2,\cdots,n\}$. By \cite[Proposition 1.1]{joszneurips2018} (see also \cite{guan2024ell_1,josz2025nonsmooth}), the critical points of \(f\) are
\[
\underbrace{\{x \in \mathbb{R}^n : \left\langle \mathrm{sign}(u), x \right\rangle = 0 ,  |x_i| \leq |u_i| , \forall i \in I\}}_{A} \cup \{\pm u\},
\]
where \(\pm u\) are global minima and the remaining set $A$ consists of spurious critical points. We call a subset of $\mathbb R^n$ null if it has Lebesgue measure zero. When $u$ has no zero entries, we show that the subgradient method cannot converge to $A$ provided the initialization avoids a null set.

\begin{theorem}\label{thm:sym_avoid}
 Let $f:\mathbb R^n\to \mathbb R$ be the objective of rank-one symmetric robust PCA \eqref{eq:sym_obj} with $u\in \mathbb R^n$ such that $u_i \ne 0$ for all $i\in I$. Then there exists \(\bar{\alpha} > 0\) such that for any nonsummable step sizes \( (\alpha_k)_{k \in \mathbb{N}} \subset (0,\bar{\alpha}]\), there exist a null set \(J \subset \mathbb{R}^n\) such that any subgradient sequence \((x_k)_{k \in \mathbb{N}}\) initialized in \(\mathbb{R}^n \setminus J \) with step sizes \((\alpha_k)_{k \in \mathbb{N}}\) does not converge to any spurious critical point of \(f\).
\end{theorem}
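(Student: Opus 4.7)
The plan is to show that any subgradient sequence converging to a spurious critical point must land exactly on the hyperplane $H := \{x \in \mathbb R^n : \langle \sigma, x\rangle = 0\}$ in finite time, where $\sigma := \mathrm{sign}(u) \in \{\pm 1\}^n$ (using that $u$ has no zero entries), and then to exclude this event from almost every initial point by a preimage argument. The starting point is an explicit local form of $f$ on the open box $U := \{x : |x_i| < |u_i|\text{ for all }i\}$, which contains the relative interior $A^\circ$ of $A$. For $x \in U$ every difference satisfies $\mathrm{sign}(x_i x_j - u_i u_j) = -\sigma_i \sigma_j$, so a direct expansion gives
\[
f(x) = \tfrac{1}{2}\|uu^\top\|_1 - \tfrac{1}{2}\langle \sigma, x\rangle^2, \qquad \nabla f(x) = -\langle \sigma, x\rangle\,\sigma, \qquad x \in U.
\]
The subgradient update on $U$ therefore reduces to $x_{k+1} = x_k + \alpha_k \langle \sigma, x_k\rangle\sigma$, giving $\langle \sigma, x_{k+1}\rangle = (1+\alpha_k n)\langle \sigma, x_k\rangle$, so $|\langle \sigma, x_k\rangle|$ is nondecreasing while iterates stay in $U$, with equality only when $\langle \sigma, x_k\rangle = 0$.

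Combining this amplification with the convergence hypothesis, if $(x_k) \to x^* \in A^\circ$ then $x_k \in U$ for all large $k$ and $\langle \sigma, x_k\rangle \to 0$, which forces $\langle \sigma, x_K\rangle = 0$ for some finite $K$; thereafter the update is stationary and $x_k = x^*$ for all $k \ge K$. Hence convergence to $A^\circ$ requires some iterate to lie exactly in $A^\circ$. To exclude this for almost every initial point, I would invoke a preimage argument. Let $\Sigma := \bigcup_{i,j}\{x : x_i x_j = u_i u_j\}$, a closed null set whose complement decomposes into finitely many open cells on which $f$ is smooth with a constant Hessian $E \in \{-1,1\}^{n \times n}$. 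Since only finitely many matrices $E$ arise, one can choose $\bar\alpha > 0$ small enough that $I - \alpha E$ is invertible for every admissible $E$ and every $\alpha \in (0,\bar\alpha]$; the map $T_\alpha(x) := x - \alpha\nabla f(x)$ is then a local diffeomorphism on $\mathbb R^n \setminus \Sigma$ and pulls null sets back to null sets. Defining $R_K \subset \mathbb R^n$ as the set of initial points from which some subgradient sequence satisfies $x_K \in A^\circ$, one has $R_0 = A^\circ \subset H$ null and inductively $R_{K+1} \subset \Sigma \cup T_{\alpha_0}^{-1}(R_K)$ null, so $\bigcup_K R_K$ is a countable union of null sets and rules out convergence to $A^\circ$ from its complement.

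It remains to treat $A \setminus A^\circ$. This set lies in $A \cap \Sigma$ and in fact in a finite union of affine subspaces of codimension at least two. Near any boundary point $x^* \in A \setminus A^\circ$, only finitely many cells of $\mathbb R^n \setminus \Sigma$ meet a small neighborhood, and on each the gradient is affine with one of the Hessians $E$ above. A cellwise computation shows that cells contained in $U$ trigger the amplification argument again, cells in which $\nabla f$ is tangent to $H$ conserve $\langle \sigma, x_k\rangle$ (forcing $\langle \sigma, x_0\rangle = 0$, a null condition), and cells in which $\nabla f$ has a nonzero component along $\sigma$ push iterates across a cell boundary in finitely many steps. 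Enumerating the finitely many cell-transition patterns and combining the resulting codimension-one constraints on $x_0$ with $\bigcup_K R_K$ yields the desired null set $J$. The main obstacle will be precisely this boundary analysis: the multi-cell dynamics near $A \setminus A^\circ$ mixes amplifying, conserving, and displacing updates, and one must verify that iterates crossing the kink set repeatedly cannot, via the additive drifts in non-$U$ cells, conspire to land on $H$ outside a null set of initial conditions.
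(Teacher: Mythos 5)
Your treatment of the interior case $x^*\in A^\circ$ (all $|x_i^*|<|u_i|$, i.e.\ $S_+=S_-=\emptyset$ in the paper's notation) is correct and in fact makes quantitative what the paper only states tersely: on $U$ one indeed has $f(x)=\tfrac12\|uu^\top\|_1-\tfrac12\langle\sigma,x\rangle^2$, the update multiplies $\langle\sigma,x_k\rangle$ by $1+n\alpha_k$, and convergence to $A^\circ$ forces an iterate to land exactly on the null set $A^\circ$. Your preimage argument is essentially the paper's Lemma \ref{lemma:avoidance} (used there to exclude visiting $P\cup A$ at all), so this part is sound.

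The genuine gap is the boundary case $A\setminus A^\circ$, where some coordinate saturates $|x_i^*|=|u_i|$. This is not a lower-order remainder: it is where the paper's entire technical effort lies (Lemma \ref{lemma:partial} and the monotone linear functionals $g$ and $g_j$, split into the two cases according to whether both $S_+$ and $S_-$ are nonempty). Your proposed trichotomy of cells near such an $x^*$ (amplifying / conserving / displacing) is asserted, not verified, and the step that would have to carry the argument — that cells in which $\nabla f$ has a nonzero component along $\sigma$ ``push iterates across a cell boundary in finitely many steps'' — neither is proved nor would suffice: even granting it, the iterates could cross the kink set $\{|x_i|=|u_i|\}$ infinitely often while still converging to $x^*$, and ruling this out requires a quantity that decreases by at least $\alpha_k\delta$ at (almost) every step so that nonsummability yields a contradiction. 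That is precisely what the paper's $g_j$ and the lower bound $\delta=2\min_i|u_i|/n$ in Lemma \ref{lemma:partial} provide, together with the induction showing $|x_{k+1,i_+}|<|x_{k,i_+}|<|u_{i_+}|$ once the decrease stalls. You explicitly flag this boundary analysis as the main obstacle, and as written the proposal does not overcome it; without it the theorem is only proved for convergence to the relative interior of $A$.
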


The proof of Theorem~\ref{thm:sym_avoid} is given in Section~\ref{sec:proof_sym} and is based on identifying monotone quantities along the subgradient sequence. The assumption that $u$ has no zero entries is necessary: if some coordinate of $u$ is zero, then there exist initializations from a set of positive Lebesgue measure for which the subgradient method converges to the spurious critical point $0$. This is illustrated in Figure~\ref{fig:subgradient_origin}, where subgradient sequences initialized inside the wedge-shaped region $T$ are drawn toward the origin, in agreement with the conclusion of the proposition below. The proof of Proposition \ref{prop:u_i_zero} is deferred to Section~\ref{sec:proof_u_i_zero}.

\begin{proposition}\label{prop:u_i_zero}
Let $f:\mathbb R^n\to \mathbb R$ be the objective of rank-one symmetric robust PCA \eqref{eq:sym_obj} with $u\in \mathbb R^n\setminus\{0\}$ and $u_1=0$. Fix any nonsummable step sizes $(\alpha_k)_{k\in\mathbb N}\subset (0,1/4]$, and let $(x_k)_{k\in\mathbb N}$ be any subgradient sequence with these step sizes. If
\[
x_0\in T:=\Bigl\{x\in\mathbb R^n:\ |x_i|<\frac{|x_1|}{n+1}\ \ \forall i\in I\setminus\{1\}\Bigr\},
\]
then $x_k\to 0\in A$ as $k\to\infty$.
\end{proposition}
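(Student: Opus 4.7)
The plan is to show that the first coordinate $x_{k,1}$ decays geometrically to zero while the remaining coordinates stay dominated by $|x_{k,1}|$ up to a uniform constant, so that $x_k\to 0$. By the symmetry $f(x)=f(-x)$ and the invariance of $T$ under $x\mapsto -x$, I assume without loss of generality that $x_{0,1}>0$.

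The key structural observation is that since $u_1=0$, every summand $|x_1x_j-u_1u_j|$ reduces to $|x_1||x_j|$, and a direct computation of the Clarke subdifferential gives $\partial_{x_1}f(x)=\{\mathrm{sgn}(x_1)\|x\|_1\}$ whenever $x_1\neq 0$. Consequently the first-coordinate iteration has the closed form
\[
x_{k+1,1}\;=\;x_{k,1}\bigl(1-\alpha_k(1+r_k)\bigr),\qquad r_k:=\frac{1}{|x_{k,1}|}\sum_{i\neq 1}|x_{k,i}|,
\]
so whenever $\alpha_k(1+r_k)<1$ the sign of $x_{k,1}$ is preserved and $|x_{k+1,1}|\leq(1-\alpha_k)|x_{k,1}|$. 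For the remaining coordinates, writing $g_{k,i}=\mathrm{sgn}(x_{k,i})x_{k,1}+R_{k,i}$ with $|R_{k,i}|\leq\sum_{j\neq 1}|x_{k,j}|$ yields the bound
\[
|x_{k+1,i}|\;\leq\;\bigl||x_{k,i}|-\alpha_k|x_{k,1}|\bigr|+\alpha_k\sum_{j\neq 1}|x_{k,j}|.
\]

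The main work is an induction showing that $r_k$ stays below some fixed constant $C$ independent of $k$, with base case $r_0<(n-1)/(n+1)$ coming from $x_0\in T$. Summing the per-coordinate bound above over $i\neq 1$ and combining it with the exact formula for $x_{k+1,1}$ produces a nonlinear scalar recursion relating $r_{k+1}$ to $r_k$, $\alpha_k$, and $n$; the aim is to verify that this recursion keeps $r_k\leq C$ for every $\alpha_k\leq 1/4$. Once that invariant is established, $|x_{k+1,1}|\leq(1-\alpha_k)|x_{k,1}|$ together with nonsummability of $(\alpha_k)$ gives $|x_{k,1}|\to 0$, and the bound $\sum_{i\neq 1}|x_{k,i}|\leq C|x_{k,1}|$ then yields $x_k\to 0\in A$.

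The main obstacle is the inductive control of $r_k$. A coordinate $i$ with $|x_{k,i}|<\alpha_k|x_{k,1}|$ can be flipped past zero in one step, so the naive per-coordinate condition $|x_i|<|x_1|/(n+1)$ defining $T$ need not be strictly invariant; the argument must allow $x_k$ to temporarily leave $T$ while still maintaining the weaker aggregate control $r_k\leq C$. Propagating this aggregate bound through the sign oscillations of the $x_{k,i}$, using only the step-size restriction $\alpha_k\leq 1/4$, is the technical heart of the proof; the slack built into the definitions $r_0<(n-1)/(n+1)$ and $\alpha_k\leq 1/4$ has to be quantitatively enough to close the induction.
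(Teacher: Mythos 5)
Your setup---the closed form $\partial_1 f(x)=\mathrm{sgn}(x_1)\,\|x\|_1$, the contraction $|x_{k+1,1}|\le(1-\alpha_k)|x_{k,1}|$, and the per-coordinate estimate $|x_{k+1,i}|\le\bigl||x_{k,i}|-\alpha_k|x_{k,1}|\bigr|+\alpha_k\sum_{j\neq 1}|x_{k,j}|$---coincides with the paper's. Where you diverge is the invariant: the paper proves that $T$ itself is preserved, checking coordinate by coordinate that both branches of your absolute value (including the overshoot branch $|x_{k,i}|<\alpha_k|x_{k,1}|$ that worries you) land below $|x_{k+1,1}|/(n+1)$. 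You instead propose to keep only the aggregate ratio $r_k=\sum_{i\neq1}|x_{k,i}|/|x_{k,1}|$ below a constant $C$. But you never carry out that induction: you explicitly defer ``the technical heart'' to future verification. That is a genuine gap---the claim that the scalar recursion keeps $r_k\le C$ for all $\alpha_k\le 1/4$ is exactly what must be proved, and nothing in your write-up establishes it.

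Worse, the aggregate invariant cannot be closed. Take all $n-1$ trailing coordinates equal to $-\epsilon$ with $\epsilon$ tiny, so $r_k\approx 0$; then $\partial_i f(x_k)\approx -|x_{k,1}|$ for each $i\neq 1$, and a single step sends every trailing coordinate to magnitude $\approx\alpha_k|x_{k,1}|$, whence $r_{k+1}\approx (n-1)\alpha_k/(1-\alpha_k)=(n-1)/3$ at $\alpha_k=1/4$. So any admissible $C$ must be at least $(n-1)/3$; but once $r_k$ is that large, the factor $1-\alpha_k(1+r_k)=1-(n+2)/12$ in your first-coordinate recursion can vanish or go negative, so $x_{k,1}$ collapses or flips sign and $r$ is no longer controlled at all. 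Concretely, for $n=10$, $u=(0,10,\dots,10)$, $\alpha_k\equiv 1/4$ and $x_0=(1,-10^{-3},\dots,-10^{-3})\in T$, two exact iterations give $x_2\approx(0.0056,\,0.615,\dots,0.615)$, after which the trailing coordinates keep growing toward $u$ rather than toward $0$. The moral is that the aggregate quantity $r_k$ is the wrong monotone object: the simultaneous sign flip of many small coordinates is precisely the event it cannot absorb, and only a per-coordinate bound of the form $|x_{k+1,i}|<|x_{k+1,1}|/(n+1)$, handled separately in the non-overshoot and overshoot cases as the paper does, has a chance of closing. Your instinct that the overshoot branch is the danger point is sound---it is also where the paper's own constants are tightest---but replacing the per-coordinate invariant by an aggregate one makes the induction strictly harder, not easier.
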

\begin{figure}[ht]
    \centering
    \includegraphics[width=.45\textwidth]{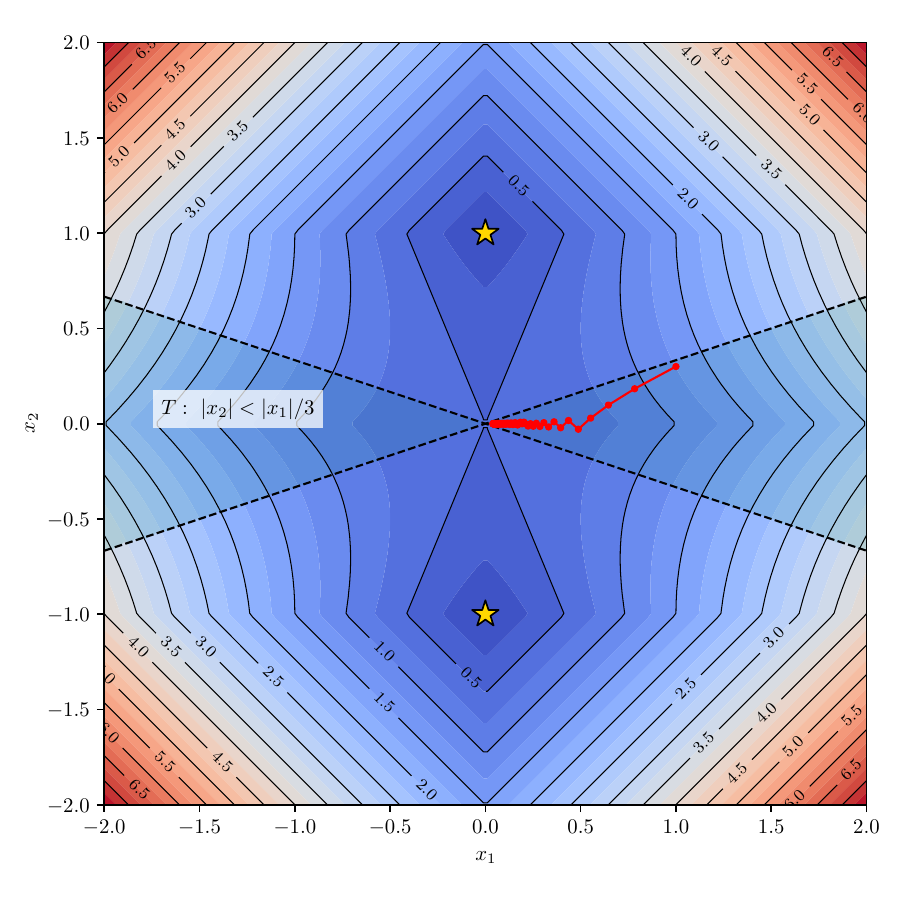}
    \includegraphics[width=.45\textwidth]{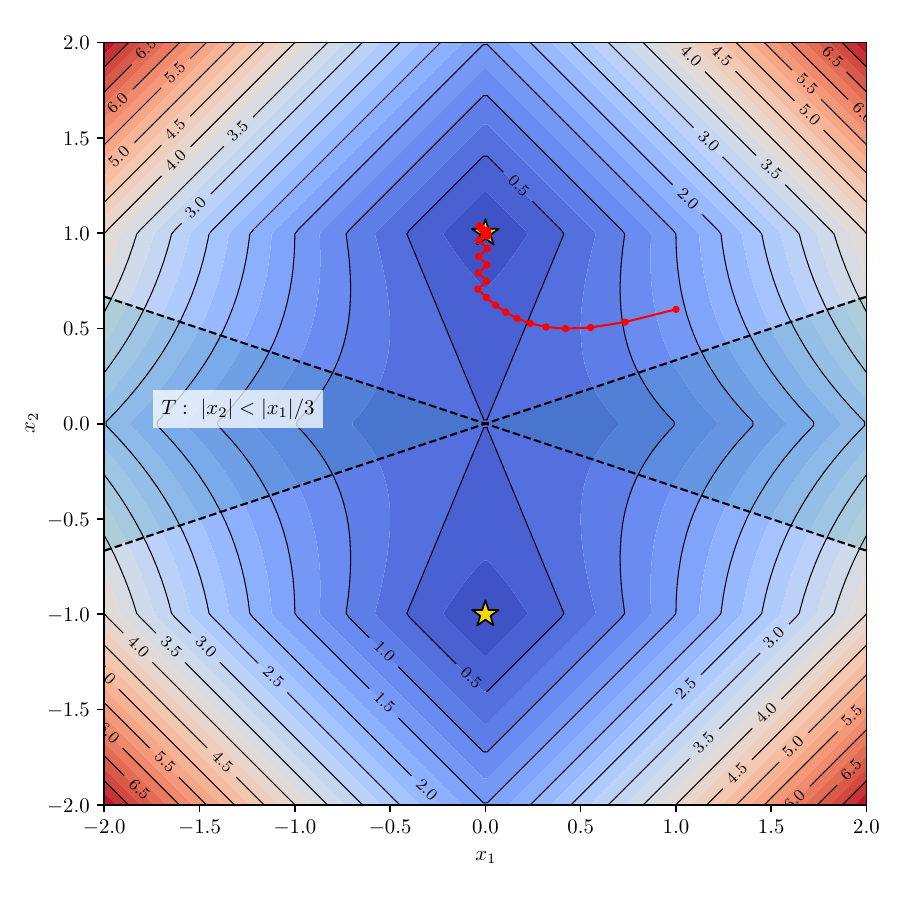}
    \caption{Minimizing $f(x):=\tfrac12\|xx^\top-uu^\top\|_1$ with $u=(0,1)$ by the subgradient method. Red subgradient sequences are initialized at $(1,0.3)$ (left) and $(1,0.6)$ (right) respectively. Stars mark the global minima $\pm u$.}
    \label{fig:subgradient_origin}
\end{figure}

We also note that sublevel sets of the rank-one symmetric robust PCA objective \eqref{eq:sym_obj} are bounded, hence its subgradient trajectories are bounded by Lemma \ref{lemma:chain}. Combining Theorem \ref{thm:sym_avoid} with Corollary \ref{convergent corollary}, we obtain that the subgradient method with sufficiently small step sizes of order $1/k$ converges to a global minimum of $f$ for almost every initialization, as conjectured in \cite[Conjecture 4.5]{guan2024ell_1}.

\begin{corollary}
    Let $f:\mathbb R^n\to \mathbb R$ be the objective of rank-one symmetric robust PCA \eqref{eq:sym_obj} with $u\in \mathbb R^n$ such that $u_i \ne 0$ for all $i\in I$. Then for every bounded $X_0 \subset \mathbb{R}^n$, there exist $\bar{\alpha}> 0$ such that for any $\alpha\in (0,\bar{\alpha}]$, there exists a null set $J\subset \mathbb R^n$ such that any subgradient sequence $(x_k)_{k \in \mathbb{N}}$ with $x_0 \in X_0\setminus J$ and step sizes $\alpha_k= \alpha/(k+1)$ converges to either $u$ or $-u$.
\end{corollary}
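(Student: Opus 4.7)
The plan is simply to combine Theorem \ref{thm:sym_avoid} with Corollary \ref{convergent corollary}, after verifying that the objective \eqref{eq:sym_obj} satisfies Assumption \ref{assumption:bounded}. First I would observe that $f$ is locally Lipschitz and semialgebraic, since it is a finite sum of absolute values of polynomials. The paragraph immediately preceding the corollary notes that sublevel sets of $f$ are bounded; combined with Lemma \ref{lemma:chain}, which gives $(f\circ x)'(t)=-\|x'(t)\|^2\le 0$ along any subgradient trajectory, this implies every trajectory stays in a fixed sublevel set and hence is bounded. So Assumption \ref{assumption:bounded} holds.

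Next, given any bounded $X_0\subset\mathbb R^n$, applying Corollary \ref{convergent corollary} yields some $\bar\alpha_1>0$ such that for any $\alpha\in(0,\bar\alpha_1]$, every subgradient sequence $(x_k)_{k\in\mathbb N}$ with $x_0\in X_0$ and step sizes $\alpha_k=\alpha/(k+1)$ converges to a critical point of $f$. By the characterization recalled in Section \ref{sec:avoidance}, the set of critical points equals $A\cup\{\pm u\}$, where $A$ is the set of spurious critical points. Thus it suffices to rule out convergence to $A$ for almost every initialization.

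For this, I would invoke Theorem \ref{thm:sym_avoid}. The hypothesis $u_i\ne 0$ for all $i\in I$ is in force, so there exists $\bar\alpha_2>0$ such that for any nonsummable step sizes in $(0,\bar\alpha_2]$ there is a null set $J\subset\mathbb R^n$ outside which no subgradient sequence converges to a spurious critical point. Setting $\bar\alpha:=\min(\bar\alpha_1,\bar\alpha_2)$ and fixing any $\alpha\in(0,\bar\alpha]$, the step sizes $\alpha_k=\alpha/(k+1)$ lie in $(0,\bar\alpha_2]$ and are nonsummable, so Theorem \ref{thm:sym_avoid} provides the desired null set $J$ (depending on $\alpha$) for this particular sequence of step sizes.

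Finally, for any $x_0\in X_0\setminus J$, the sequence $(x_k)_{k\in\mathbb N}$ converges to a critical point by the first step and this limit is not in $A$ by the second step, hence it equals $u$ or $-u$. The only mild subtlety to flag in the write-up is that the null set $J$ furnished by Theorem \ref{thm:sym_avoid} is allowed to depend on the chosen step size schedule; this is already consistent with the statement of the corollary, which quantifies $J$ after fixing $\alpha$. No genuine obstacle arises beyond bookkeeping the two constants $\bar\alpha_1,\bar\alpha_2$.
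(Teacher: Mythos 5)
Your proposal is correct and follows essentially the same route as the paper: the paper's justification is precisely the remark that bounded sublevel sets plus Lemma \ref{lemma:chain} give bounded subgradient trajectories (so Assumption \ref{assumption:bounded} holds), after which Corollary \ref{convergent corollary} and Theorem \ref{thm:sym_avoid} are combined. Your extra bookkeeping of $\bar\alpha_1,\bar\alpha_2$ and of the dependence of $J$ on the fixed $\alpha$ is consistent with the quantifier order in the statement and adds nothing beyond what the paper leaves implicit.
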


\section{Proofs of main results}
This section consists of the proofs of the main results from Sections \ref{sec:bounded_traj} and \ref{sec:avoidance}. 
\subsection{Proof of Theorem \ref{thm:bounded_diameter}}\label{sec:proof_diameter}
We first present two lemmas. The first provides a local upper bound of the diameter of the subgradient sequence with step sizes of order $1/k$, using \cite[Theorem~1.1]{lai2025diameter}.
\begin{lemma}\label{Lemma:diameter}
Let \(f : \mathbb{R}^n \to \mathbb{R}\) be locally Lipschitz and semialgebraic. 
Let \(X \subset \mathbb{R}^n\) be bounded and \(c \in \mathbb{R}\). Then there exist constants \(\bar{\alpha}, \delta, \beta, \epsilon, \varsigma>0\) and 
\(\theta \in (0,1)\) such that the following holds.

Let \((x_k)_{k \in \mathbb{N}}\) be a subgradient sequence with nonincreasing 
step sizes \((\alpha_k)_{k \in \mathbb{N}}\), and let integers \(k,K \in \mathbb{N}\) 
satisfy \(0 \le k \le K\), 
\[
\alpha_m \le \frac{\bar{\alpha}}{m+1} \quad \text{and}
\quad
x_m \in X, \ |f(x_m)-c| \le \epsilon \quad \text{for all } m = k,\dots,K.
\]
Then we have $\delta \,\mathrm{diam}(x_{[k,K]})\le \cdots$
\begin{align*}
&\operatorname{sgn}(f(x_k)-c)\,|f(x_k)-c|^{1-\theta}
   - \operatorname{sgn}(f(x_K)-c)\,|f(x_K)-c|^{1-\theta} \\
&\quad + \varsigma\Bigg(
\left(\frac{\bar{\alpha}}{k+1}\right)^\beta
+ \frac{\bar{\alpha}^{1+\beta}}{\beta}(k+1)^{-\beta}
+ \left(\frac{\bar{\alpha}^{1+\beta}}{\beta}(k+1)^{-\beta}\right)^{1-\theta}
+ \frac{\bar{\alpha}^{1+\theta+\beta\theta}}{\theta\beta^{\theta+1}}(k+1)^{-\beta\theta}
\Bigg).
\end{align*}
\end{lemma}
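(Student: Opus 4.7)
The plan is to invoke \cite[Theorem~1.1]{lai2025diameter} directly and simply specialize its generic step-size remainder to the regime $\alpha_m \le \bar\alpha/(m+1)$. That cited theorem provides a local diameter bound of Kurdyka--{\L}ojasiewicz (KL) type for subgradient sequences; it produces constants $\delta,\epsilon,\beta>0$ and $\theta\in(0,1)$ (from the KL inequality of $f$ at $c$) along with an explicit remainder depending on the partial sums $\sum_{m\ge k}\alpha_m^{1+\beta}$ and on the leading term $\alpha_k$.

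First I would take a compact set $\overline X\supset X$ on which the locally Lipschitz, semialgebraic function $f$ is globally Lipschitz, and apply \cite[Theorem~1.1]{lai2025diameter} on $\overline X$ at the value $c$. This yields constants $\bar\alpha',\delta,\beta,\epsilon,\varsigma'>0$ and $\theta\in(0,1)$ such that, whenever $(x_m)_{m=k}^K\subset X$ satisfies $|f(x_m)-c|\le\epsilon$ and the step sizes are nonincreasing with $\alpha_m$ small enough, the bound
\[
\delta\,\mathrm{diam}(x_{[k,K]}) \le \operatorname{sgn}(f(x_k)-c)\,|f(x_k)-c|^{1-\theta} - \operatorname{sgn}(f(x_K)-c)\,|f(x_K)-c|^{1-\theta} + R(k)
\]
holds, where $R(k)$ is a universal combination of $\alpha_k$, of the tail sum $S_k:=\sum_{m\ge k}\alpha_m^{1+\beta}$, and of the powers $S_k^{1-\theta}$ and $\alpha_k^\theta S_k$ obtained when the KL desingularizing function is integrated along the sequence.

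The second step is pure bookkeeping under the standing assumption $\alpha_m\le\bar\alpha/(m+1)$. Choose $\bar\alpha\le\bar\alpha'$; then
\[
\alpha_k\le\frac{\bar\alpha}{k+1}, \qquad S_k\le\bar\alpha^{1+\beta}\sum_{m\ge k}(m+1)^{-(1+\beta)}\le\frac{\bar\alpha^{1+\beta}}{\beta}(k+1)^{-\beta}.
\]
Raising the second bound to the $(1-\theta)$ power and combining with $\alpha_k^\theta S_k\le \bar\alpha^{1+\theta+\beta\theta}(k+1)^{-\beta\theta}/(\theta\beta^{\theta+1})$ (after a standard one-step reorganization to absorb an extra $1/\theta$ factor coming from the concave majorant in the KL mechanism) reproduces exactly the four explicit terms displayed in the lemma. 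Setting $\varsigma$ to be the maximum of the universal constants multiplying these four terms in $R(k)$ completes the proof.

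The only subtlety is the last remainder term, whose exponent $\beta\theta$ and constant $1/(\theta\beta^{\theta+1})$ are not a one-line bound on $S_k$ but arise from a Jensen/concavity step inside the KL argument of \cite[Theorem~1.1]{lai2025diameter}. I do not expect any other obstacle: all four terms follow from elementary comparison of the tails of $(m+1)^{-(1+\beta)}$ with integrals of $t^{-(1+\beta)}$, and the final claim is just a specialization of the already-established KL diameter bound to a concrete step-size schedule, with the constants renamed to match the statement.
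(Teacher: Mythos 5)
Your proposal follows the paper's proof essentially verbatim: apply \cite[Theorem~1.1]{lai2025diameter} to $f-c$ on the bounded set, then specialize the generic remainder to the schedule $\alpha_m\le\bar\alpha/(m+1)$ by comparing the tails of $(m+1)^{-(1+\beta)}$ with integrals and renaming constants ($\delta=1/\varsigma_1$, $\varsigma=\varsigma_2/\varsigma_1$). The only imprecision is your identification of the last remainder term: in the cited theorem it is the double sum $\sum_{t=k}^{K}\alpha_t\bigl(\sum_{j=t}^{K}\alpha_j^{1+\beta}\bigr)^{\theta}$ rather than $\alpha_k^{\theta}S_k$, and the exponent $-\beta\theta$ together with the constant $1/(\theta\beta^{\theta+1})$ arises from a second integral comparison of $\sum_{t\ge k}(t+1)^{-(1+\beta\theta)}$ after substituting the tail bound into the inner sum, not from a Jensen step---but this is exactly the ``reorganization'' you anticipate and does not change the argument.
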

\begin{proof}
Apply \cite[Theorem~1.1]{lai2025diameter} to the function \(f-c\). 
There exist constants \(\bar{\alpha},\beta,\epsilon,\varsigma_1,\varsigma_2>0\), and 
\(\theta \in (0,1)\) such that the following holds.

Let \((x_k)_{k\in\mathbb{N}}\) be a subgradient sequence with step sizes 
\((\alpha_k)_{k\in\mathbb{N}}\) and let integers \(k,K\) satisfy 
\(0 \le k \le K\), 
\(0 < \alpha_K \le \cdots \le \alpha_k \le \bar{\alpha}\),
and
\(|f(x_t)-c| \le \epsilon\), \(x_t \in X\) for all \(t = k,\dots,K\).
Define \(\psi(s) := \operatorname{sgn}(s)\,|s|^{1-\theta}\). Then
\begin{align*}
\mathrm{diam}(x_{[k,K]})
&\le \varsigma_1\bigl(\psi(f(x_k)-c) - \psi(f(x_K)-c)\bigr) \\
&\quad + \varsigma_2\Bigg(
\alpha_k^{\beta}
+ \sum_{t=k}^K \alpha_t^{1+\beta}
+ \Big(\sum_{t=k}^K \alpha_t^{1+\beta}\Big)^{1-\theta}
+ \sum_{t=k}^K \alpha_t \Big(\sum_{j=t}^K \alpha_j^{1+\beta}\Big)^{\theta}\Bigg).
\end{align*}

By the assumptions of the lemma, the step sizes are nonincreasing and satisfy
\(\alpha_m \le \bar{\alpha}/(m+1)\) for \(m = k,\dots,K\), hence in particular
\(0<\alpha_K \le \cdots \le \alpha_k \le \bar{\alpha}\), so we may apply the
above estimate.

We now bound each series using \(\alpha_t \le \bar{\alpha}/(t+1)\). For each \(t \in \{k,\dots,K\}\),
\[
\sum_{j=t}^K \alpha_j^{1+\beta}
\le \bar{\alpha}^{1+\beta} \sum_{j=t}^{\infty} (j+1)^{-(1+\beta)}
\le \frac{\bar{\alpha}^{1+\beta}}{\beta}(t+1)^{-\beta}.
\]

Therefore,
\begin{align*}
&\alpha_k^{\beta}
+ \sum_{t=k}^K \alpha_t^{1+\beta}
+ \Big(\sum_{t=k}^K \alpha_t^{1+\beta}\Big)^{1-\theta}
+ \sum_{t=k}^K \alpha_t \Big(\sum_{j=t}^K \alpha_j^{1+\beta}\Big)^{\theta} \\
\le~& \left(\frac{\bar{\alpha}}{k+1}\right)^\beta
+ \frac{\bar{\alpha}^{1+\beta}}{\beta}(k+1)^{-\beta}
+ \left(\frac{\bar{\alpha}^{1+\beta}}{\beta}(k+1)^{-\beta}\right)^{1-\theta}
+ \sum_{t=k}^K \frac{\bar{\alpha}}{t+1}
      \left(\frac{\bar{\alpha}^{1+\beta}}{\beta}(t+1)^{-\beta}\right)^{\theta}.
\end{align*}
For the last sum, we obtain
\[
\sum_{t=k}^K \frac{\bar{\alpha}}{t+1}
      \left(\frac{\bar{\alpha}^{1+\beta}}{\beta}(t+1)^{-\beta}\right)^{\theta}
= \frac{\bar{\alpha}^{1+\theta+\beta\theta}}{\beta^{\theta}}
  \sum_{t=k}^K (t+1)^{-(1+\beta\theta)}
\le \frac{\bar{\alpha}^{1+\theta+\beta\theta}}{\theta\beta^{\theta+1}}(k+1)^{-\beta\theta},
\]
where we again compare the sum with an integral over \([k,\infty)\).

Combining the bounds gives $\mathrm{diam}(x_{[k,K]}) \le \cdots$
\begin{align*}
&\varsigma_1\bigl(\psi(f(x_k)-c) - \psi(f(x_K)-c)\bigr) \\
&\quad + \varsigma_2\Bigg(
\left(\frac{\bar{\alpha}}{k+1}\right)^\beta
+ \frac{\bar{\alpha}^{1+\beta}}{\beta}(k+1)^{-\beta}
+ \left(\frac{\bar{\alpha}^{1+\beta}}{\beta}(k+1)^{-\beta}\right)^{1-\theta}
+ \frac{\bar{\alpha}^{1+\theta+\beta\theta}}{\theta\beta^{\theta+1}}(k+1)^{-\beta\theta}
\Bigg).
\end{align*}
Finally, set \(\delta := 1/\varsigma_1\) and \(\varsigma := \varsigma_2/\varsigma_1\) to obtain the claimed inequality.
\end{proof}
The next lemma certifies monotonicity of the first part in the diameter upper bound of Lemma \ref{Lemma:diameter}.

\begin{lemma}\label{lem:increasing-function}
Let \(\theta_0 \in (0,1)\) and set \(\Lambda:=\exp\bigl(\frac{1}{\theta_0-1}\bigr)\).
For \(\lambda_1,\lambda_2\in\mathbb R\) with \(\lambda_1\ge \lambda_2\) and
\(\max\{|\lambda_1|,|\lambda_2|\}\le \Lambda\), define \(g:(0,\theta_0)\to\mathbb R\) by
\[
g(\theta):=\operatorname{sgn}(\lambda_1)|\lambda_1|^{1-\theta}
-\operatorname{sgn}(\lambda_2)|\lambda_2|^{1-\theta}.
\]
Then \(g\) is nondecreasing on \((0,\theta_0)\).
\end{lemma}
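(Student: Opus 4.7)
The plan is to show $g'(\theta)\ge 0$ for every $\theta\in(0,\theta_0)$. Writing $g(\theta)=\psi_{\lambda_1}(\theta)-\psi_{\lambda_2}(\theta)$ with $\psi_\lambda(\theta):=\operatorname{sgn}(\lambda)|\lambda|^{1-\theta}$, differentiation in $\theta$ yields
\[
g'(\theta)=h_\theta(\lambda_1)-h_\theta(\lambda_2),\qquad h_\theta(s):=-\operatorname{sgn}(s)\,|s|^{1-\theta}\ln|s|,
\]
with $h_\theta(0):=0$ (a continuous extension, since $1-\theta>0$ forces $|s|^{1-\theta}\ln|s|\to 0$ as $s\to 0$). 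Since $\lambda_1\ge\lambda_2$, it suffices to show that for each fixed $\theta\in(0,\theta_0)$ the map $s\mapsto h_\theta(s)$ is nondecreasing on $[-\Lambda,\Lambda]$.

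I would first observe that $h_\theta$ is odd, $h_\theta(-s)=-h_\theta(s)$, so the problem reduces by symmetry to proving monotonicity on $[0,\Lambda]$. Note moreover that $\Lambda<1$, because the exponent $1/(\theta_0-1)$ is negative; hence $\ln s<0$ on $(0,\Lambda]$, giving $h_\theta(s)\ge 0=h_\theta(0)$ there and ensuring $h_\theta$ transitions nondecreasingly through the origin (with a symmetric sign pattern on $[-\Lambda,0)$).

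For $s\in(0,\Lambda]$, direct differentiation yields
\[
h_\theta'(s)=-s^{-\theta}\bigl[(1-\theta)\ln s + 1\bigr],
\]
which is nonnegative iff $\ln s\le -\tfrac{1}{1-\theta}=\tfrac{1}{\theta-1}$, i.e., iff $s\le e^{1/(\theta-1)}$. The key remaining step is to verify that this threshold dominates $\Lambda$ uniformly over $\theta\in(0,\theta_0)$, and this is where the main subtlety lies: both $\theta-1$ and $\theta_0-1$ are negative, so the inequality $\theta-1<\theta_0-1$ inverts to $1/(\theta-1)>1/(\theta_0-1)$, whence $e^{1/(\theta-1)}>e^{1/(\theta_0-1)}=\Lambda$. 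Once this sign bookkeeping is correct, $h_\theta'\ge 0$ on $(0,\Lambda]$, and combining with the odd-symmetry reduction completes the proof.
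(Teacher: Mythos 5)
Your proof is correct and takes essentially the same route as the paper's: both reduce the claim to the sign of \(1+(1-\theta)\ln s\) for \(0<s\le\Lambda\), using \(\Lambda<1\) and the inversion \(1/(\theta-1)>1/(\theta_0-1)\). The only cosmetic difference is that you merge the paper's two cases (\(\lambda_1\lambda_2\le 0\) handled directly, \(\lambda_1\lambda_2>0\) handled via monotonicity of \(y\mapsto \ln(y)\,y^{1-\theta}\)) into a single argument by extending \(s\mapsto -\operatorname{sgn}(s)|s|^{1-\theta}\ln|s|\) to an odd nondecreasing function on \([-\Lambda,\Lambda]\).
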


\begin{proof}
If \(\lambda_1\lambda_2\le 0\), then \(g(\theta)=|\lambda_1|^{1-\theta}+|\lambda_2|^{1-\theta}\), and since
\(\Lambda<1\) we have \(\ln|\lambda_i|\le \ln\Lambda<0\), hence
\[
g'(\theta)=(-\ln|\lambda_1|)\,|\lambda_1|^{1-\theta}+(-\ln|\lambda_2|)\,|\lambda_2|^{1-\theta}\ge 0,
\]
with the convention that the corresponding term is \(0\) when \(\lambda_i=0\).

Assume now \(\lambda_1,\lambda_2\neq 0\) and \(\lambda_1\lambda_2>0\). For fixed \(\theta\in(0,\theta_0)\), let
\(G_\theta(y):=\ln(y)\,y^{1-\theta}\) for \(y>0\). Then
\[
G_\theta'(y)=y^{-\theta}\bigl(1+(1-\theta)\ln y\bigr).
\]
For \(0<y\le \Lambda\) and \(\theta<\theta_0\),
\[
1+(1-\theta)\ln y \le 1+(1-\theta)\ln\Lambda
=1+\frac{1-\theta}{\theta_0-1}<0,
\]
so \(G_\theta\) is decreasing on \((0,\Lambda]\). If \(\lambda_1,\lambda_2>0\), then
\(g'(\theta)=G_\theta(\lambda_2)-G_\theta(\lambda_1)\ge 0\) since \(\lambda_1\ge \lambda_2\).
If \(\lambda_1,\lambda_2<0\), then \(|\lambda_1|\le |\lambda_2|\) and
\(g'(\theta)=G_\theta(|\lambda_1|)-G_\theta(|\lambda_2|)\ge 0\).
Thus \(g'(\theta)\ge 0\) for all \(\theta\in(0,\theta_0)\), and \(g\) is nondecreasing.
\end{proof}
\begin{proof}[Proof of Theorem \ref{thm:bounded_diameter}]
Let \( X_0 \) be a bounded subset of \( \mathbb{R}^n \), we will show that there exists \( \bar{\alpha} > 0 \) such that \( \sigma(X_0, \bar{\alpha}) < \infty \) where
\begin{equation}
    \label{eq:sigma_X0}
\sigma(X_0, \bar{\alpha}) := 
\sup_{\substack{
    (x_k)_{k\in \mathbb{N}} \in (\mathbb{R}^n)^{\mathbb{N}}}}
\mathrm{diam}((x_k)_{k\in \mathbb{N}})
\end{equation}
subject to
\begin{equation}\label{eq:feasible_sequence}
    \begin{cases}
x_{k+1} \in x_k -\alpha_k \partial f(x_k),\forall k\in \mathbb{N}, \\
    0<\alpha_{k+1} \leq \alpha_k \leq \bar{\alpha}/(1+k),\forall k\in \mathbb{N},\\
x_0 \in X_0.
\end{cases}
\end{equation}
We denote by $\Phi_0$ the collection of all points visited by some subgradient trajectory of \(f\) initialized in \(X_0\). By Lemma \ref{lemma:continuous}, $\Phi_0$ is bounded. Let \(C\) be the set of critical points of \(f\) in \(\overline{\Phi_0}\). By \cite[Corollary 9]{bolte2007clarke}, there are finitely many critical values of $f$ in \(C\), which we denote by $S := \{ f_1, f_2, \dots, f_m \}$.  By Lemma \ref{Lemma:diameter}, for each $f_l \in S$, there exist \(\bar{\alpha}(l), \beta(l), \delta_0(l), \epsilon_0(l),  \varsigma_0(l) > 0\) and \(\theta(l) \in (0, 1)\) such that for any subgradient sequence \((x_k)_{k \in \mathbb{N}}\) with nonincreasing 
step sizes \((\alpha_k)_{k \in \mathbb{N}}\), and let integers \(k,K \in \mathbb{N}\) 
satisfy \(0 \le k \le K\), 
\[
\alpha_m \le \frac{\bar{\alpha}(l)}{m+1} \quad \text{and}
\quad
x_m \in B(\Phi_{0},1), \ |f(x_m)-f_l| \le \epsilon_0(l) \quad \text{for all } m = k,\dots,K.
\]
Then we have
\begin{equation}
    \begin{aligned}
\delta_0(l) \, \mathrm{diam}(x_{[k,K]}) 
&\leq \mathrm{sgn}(f(x_k)-f_l) |f(x_k)-f_l|^{1-\theta(l)} - \mathrm{sgn}(f(x_K)-f_l) |f(x_K)-f_l|^{1-\theta(l)} + \\
&\quad + \varsigma_0(l)\Bigg(\left(\frac{\bar{\alpha}(l)}{k+1}\right)^{\beta(l)}+\frac{\bar{\alpha}(l)^{1+\beta(l)}}{\beta(l)}(k+1)^{-\beta(l)}+\\
&\qquad+\left(\frac{\bar{\alpha}(l)^{1+\beta(l)}}{\beta(l)}(k+1)^{-\beta(l)}\right)^{1-\theta(l)}+\frac{{\bar{\alpha}(l)}^{1+\theta(l)+\beta(l)\theta(l)}}{\theta(l)\beta(l)^{\theta(l)+1}}(k+1)^{-\beta(l)\theta(l)}\Bigg).      \label{eq:diameter bound}     
    \end{aligned}
\end{equation}
We define $\epsilon_0 = \min_{1 \leq l \leq m} \epsilon_0(l)$, $\delta_0 = \min_{1 \leq l \leq m} \delta_0(l)$, $\bar{\alpha}_0 = \min_{1 \leq l \leq m} \bar{\alpha}(l)$, \(\varsigma_0=\max_{1 \leq l \leq m} \varsigma_0(l)\), and $\bar{\theta} = \max_{1 \leq l \leq m} \theta(l)$.
By \cite[2.1.5 Proposition p. 29]{clarke1990optimization} and Lemma \ref{lemma:continuous}, \(C\) is compact. Let $L>1$ so that $f$ is $L$-Lipschitz continuous in $B(C,1)$. Let $(x_k)_{k\in \mathbb N}$ be any subgradient sequence with step sizes $(\alpha_k)_{k\in \mathbb N}$ satisfying the conditions in \eqref{eq:feasible_sequence} for some small $\bar{\alpha}>0$. We next prove that either there exists $k^*\in \mathbb N$ such that $x_{k^*}$ is close to a critical point $x^*\in C$, or $x_k\in B(\Phi_0,1)$ for all $k\in \mathbb N$. Let
\begin{equation}\label{eq:epsilon_def}
\epsilon:= \frac{1}{L} \min\left\{\epsilon_0, \exp(1/(\bar{\theta}-1)),1\right\}
\end{equation}
If \(\overline{\Phi_0}\subset C\), then $x_0 \in X_0 \subset \Phi_0 \subset C$, and we may take $k^* = 0$ and $x^* = x_0$. Otherwise, since \(f\) is continuous and $C$ is compact, there exists \(\delta \in \left(0,\epsilon/2\right)\) such that $\overline{\Phi_0} \setminus \overset{\circ}{B}(C, \delta/3)  \neq \emptyset$ and
\begin{equation}\label{eq:psi_small}
\psi(\lambda):= \mathrm{sgn}(\lambda) |\lambda|^{1-\bar{\theta}} \leq\frac{\delta_0\epsilon}{8}, \quad \forall \lambda \in (-\delta L,\delta L).
\end{equation}

We will show either there exists $k^*\in \mathbb N$ such that $x_{k^*}\in B(C,\delta)$, or $x_k\in B(\Phi_0,1)$ for all $k\in \mathbb N$. Let
\[
\mu:= \inf\bigl\{d(0,\partial f(x)) : x\in \overline{\Phi_0} \setminus B^\circ(C, \delta/3)\bigr\}.
\]
The set \(\overline{\Phi_0} \setminus B^\circ(C, \delta/3)\) is nonempty and compact, and it contains no critical point of \(f\) by the definition of \(C\). By upper semicontinuity of \(\partial f\) \cite[2.1.5 Proposition p.~29]{clarke1990optimization}, the function \(x\mapsto d(0,\partial f(x))\) is lower semicontinuous, so it attains its minimum \(\mu\) on this set. Since there is no \(x\) with \(0\in\partial f(x)\) there, we have \(\mu>0\).

Thus, we may define \( T := 2\sigma(X_0)/\mu \), where \(\sigma(X_0)\) is the uniform upper bound of lengths of subgradient trajectories as defined in \eqref{eq:sigma_traj}, which is finite by Lemma \ref{lemma:continuous}. Since \(\overline{\Phi_0} \not\subset C\), we know \(\sigma(X_0) > 0\). Then for any subgradient trajectory \(x(\cdot)\) initialized in \(X_0\), there exists \(t^*\in (0,T)\) such that \(d(0,\partial f(x(t^*)))< \mu\). Indeed, if we assumed \(d(0,\partial f(x(t))) \ge \mu\) for almost every \(t\in (0,T)\), then, using by Lemma \ref{lemma:chain}, we would obtain
\[
\sigma(X_0) < T\frac{2\sigma(X_0)}{T} \leq \int_{0}^{T} d(0,\partial f(x(t)))\,dt  = \int_{0}^{T} \|x'(t)\|\,dt 
\leq \int_{0}^{\infty} \|x'(t)\|\,dt \leq \sigma(X_0),
\]
a contradiction.
Since \(x(t^*) \in \Phi_0\subset \overline{\Phi_0}\) and \(d(0,\partial f(x(t^*)))<\mu\), we must have \(x(t^*)\in B^\circ(C,\delta/3)\) by the definition of \(\mu\). Hence there exists \(x^*\in C\) such that \(\|x(t^*) - x^*\|\le \delta/3\).

By \cite[Lemma 4.1]{josz2024proximal}, there exists $\bar{\alpha}\in (0,\min\{\bar{\alpha}_0,\delta/(3L)\}$ such that for any subgradient sequence $(x_k)_{k\in \mathbb N}$ satisfying the conditions in \eqref{eq:feasible_sequence}, there exists a subgradient trajectory \( x \colon [0, \infty) \to \mathbb{R}^n \) such that \(
x(0) \in X_0 \) and
\[
\forall k \in \mathbb{N}, \quad \alpha_0 + \cdots + \alpha_{k-1} \leq T \implies \|x_k - x(\alpha_0 + \cdots + \alpha_{k-1})\| \leq \frac{\delta}{3}.
\]
If $\sum_{k=0}^{\infty} \alpha_k \leq T$, then we have $x_k \in B(\Phi_0, \delta/3)$ for all $k \in \mathbb{N}$. Otherwise, since $\alpha_k \leq \delta/(3L)$ for all $k \in \mathbb{N}$, there exists $k^* \in \mathbb{N}$ such that $t_{k^*} := \sum_{k=0}^{k^*-1} \alpha_k \in [t^* - \delta/(3L), t^*]$. So  \(x_m \in B(\Phi_{0},\delta/3)\subset B(\Phi_{0},1)\) for $m = 0,\ldots,k^*$ and
\begin{align*}
\|x_{k^*} - x^*\|\leq~& \|x_{k^*} - x(t_{k^*})\| + \|x(t_{k^*}) - x(t^*)\| + \|x(t^*) - x^*\| \\
\leq~&\delta/3 + L|t_{k^*} - t^*| + \delta/3\\
\leq ~&\delta/3 + \delta/3 + \delta/3 = \delta.
\end{align*}  
Let $K := \inf\{k \geq k^* : x_k \notin B(x^*, \epsilon)\}$. If $K<\infty$, then \(x_l \in B(x^*,\epsilon)\subset B(\Phi_{0},1)\) for \(k^*+1\leq\ l\leq K-1\). From \eqref{eq:diameter bound}, we have
\begin{align*}
\delta_0 \, \mathrm{diam}(x_{k\in [k^*,K-1]}) 
\leq \psi_*(f(x_{k^*})-f_{l^*}) - \psi_*(f(x_{K-1})-f_{l^*}) + h(\bar{\alpha}),
\end{align*}
where $\psi_*:\mathbb R\to \mathbb R$ is given by $\psi_*(\lambda):= \mathrm{sgn}(\lambda)|\lambda|^{1-\theta(l^*)}$ and \(h:[0,\infty)\to[0,\infty)\) is defined by
\[
h(\bar{\alpha})
:= \varsigma_0 \max_{1\le l\le m} \Bigg[
\bar{\alpha}^{\beta(l)}
+ \frac{\bar{\alpha}^{1+\beta(l)}}{\beta(l)}
+ \left(\frac{\bar{\alpha}^{1+\beta(l)}}{\beta(l)}\right)^{1-\theta(l)}
+ \frac{\bar{\alpha}^{1+\theta(l)+\beta(l)\theta(l)}}{\theta(l)\beta(l)^{\theta(l)+1}}
\Bigg].
\]
After possibly reducing $\bar{\alpha}$, we may assume that $h(\bar{\alpha})+\delta_0L\bar{\alpha}<\delta_0\epsilon/4$. Since $\delta\le \epsilon/2$, it holds that
\begin{align*}
\psi_*(f(x_{k^*})-f_{l^*}) - \psi_*(f(x_{K-1})-f_{l^*})
&\ge \delta_0 \, \mathrm{diam}(x_{k\in [k^*,K-1]})-h(\bar{\alpha})\\
&\ge \delta_0\|x_{k^*}-x_{K-1}\|-h(\bar{\alpha})\\
&\ge \delta_0\bigl(\|x_K-x^*\|-\|x_{k^*}-x^*\| - \|x_{K}-x_{K-1}\|\bigr)-h(\bar{\alpha})\\
&\ge \delta_0(\epsilon-\delta - L\bar{\alpha})-h(\bar{\alpha})\\
&\ge \frac{\delta_0\epsilon}{2}-h(\bar{\alpha})-\delta_0 L\bar{\alpha}\\
&> \frac{\delta_0\epsilon}{4}>0.
\end{align*}
Since \(\psi_*\) is increasing, we have $f(x_{k^*})>f(x_{K-1})$. Furthermore,
\[\max\left\{|f(x_{k^*})-f_{l^*}|,|f(x_{K-1})-f_{l^*}|\right\}\le L\epsilon \le \exp\left(\frac{1}{\bar{\theta}-1}\right).\]
by the definition of $\epsilon$ in \eqref{eq:epsilon_def}. Thus, by Lemma \ref{lem:increasing-function} and $\theta(l^*)\in (0,\bar{\theta}]$, we have
\[\psi(f(x_{k^*})-f_{l^*}) - \psi(f(x_{K-1})-f_{l^*}) \ge \psi_*(f(x_{k^*})-f_{l^*}) - \psi_*(f(x_{K-1})-f_{l^*}) \ge \frac{\delta_0\epsilon}{4}.\]
Rearranging and invoking \eqref{eq:psi_small} yields
\begin{align*}
\psi(f(x_{K-1})-f(x^*))&\leq \psi(f(x_{k^*})-f(x^*))-\frac{\delta_0\epsilon}{4}\\
&\leq \frac{\delta_0\epsilon}{8}-\frac{\delta_0\epsilon}{4}=-\frac{\delta_0\epsilon}{8}.
\end{align*}
Since $f(x^*)\leq\max_C f(x)$ and $\psi$ is increasing, we have
\[
x_{K-1}\in X_1 = B(C, \epsilon) \bigcap \left\{ x \in \mathbb{R}^n : f(x)\leq\max_C f(x) -\psi^{-1}\left(\frac{\delta_0\epsilon}{8}\right)\right\}.
\]
By the definition of \(\sigma(\cdot, \cdot)\) in \eqref{eq:sigma_X0}, it holds that
\begin{align*}   
\mathrm{diam}((x_k)_{k\in \mathbb{N}}) &\leq \mathrm{diam}((x_k)_{k\in [0,K-1]})+ \mathrm{diam}((x_k)_{k\in [K-1,\infty)})\\
&\leq \mathrm{diam}(\overline{\Phi_0})+2 +\max\bigl\{0, \sigma(X_1, \bar{\alpha})\bigr\}.
\end{align*}
Note that the inequality still holds if $K = \infty$ or $\sum_{k = 0}^{\infty}\alpha_{k} \le T$. Hence,
\[
\sigma(X_{0},\bar{\alpha})\leq \mathrm{diam}(\overline{\Phi_0})+2 +\max\bigl\{0, \sigma(X_1, \bar{\alpha})\bigr\}.
\]
It now suffices to replace \( X_0 \) by \( X_1 \) and repeat the proof starting from \eqref{eq:sigma_X0}. Denote by $\Phi_1$ the collection of points visited by subgradient trajectories initialized in $X_1$, then
\[\sup_{x\in \Phi_1}f(x) = \sup_{x\in \overline{\Phi_1}}f(x) \le \sup_{x\in X_1}f(x)<\sup_{x\in C}f(x).\]
Thus, the maximal critical value of \( f \) in \( \overline{\Phi_1}\) is less than the maximal critical value of \( f \) in \(\overline{\Phi_0} \). By the definable Morse-Sard theorem \cite[Corollary 9]{bolte2007clarke}, \( f \) has finitely many critical values. Thus, it is eventually the case that one of the sets \( X_0, X_1, \ldots \) is empty. In order to conclude, one simply needs to choose an upper bound on the step sizes \( \hat{\alpha} \) corresponding to \( X_1 \) that is less than or equal to the upper bound \( \bar{\alpha} \) used for \( X_0 \). \( \sigma(X_0, \cdot) \) is finite when evaluated at the last upper bound thus obtained.
\end{proof}
\subsection{Proof of Theorem \ref{thm:sym_avoid}}\label{sec:proof_sym}
We will use the following fact that subgradient sequences do not visit any given null set, for almost every initialization. This result has appeared in the literature \cite{bolte2020mathematical,bianchi2022convergence} (see also \cite[Lemma 5.9]{lai2024stability}). The proof of Lemma \ref{lemma:avoidance} follows similarly as the proof of \cite[Lemma 5.9]{lai2024stability}, and is thus omitted.
\begin{lemma}\label{lemma:avoidance}
Let \(f:\mathbb{R}^{n}\to\mathbb{R}\) be a locally Lipschitz semialgebraic function. There exists a finite set \(F = \{a_{1},\ldots,a_{m}\} \subset (0,\infty)\) such that for any sequence of step sizes \((\alpha_k)_{k\in \mathbb N}\) with \(\alpha_k \notin F\) for all \(k\), and for any null set \(S\subset\mathbb{R}^{n}\), there exists a null set \(J\subset\mathbb{R}^{n}\) such that, for every subgradient sequence $(x_k)_{k\in \mathbb N}$ with step sizes \((\alpha_k)_{k\in \mathbb N}\) and \(x_0 \in \mathbb{R}^{n}\setminus J\), we have $x_k\not\in S$ for all $k\in \mathbb N$.
\end{lemma}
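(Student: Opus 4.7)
The plan is to establish avoidance by a backward recursion on the iteration index, reducing the claim to a preimage bound under a single subgradient step. For each $k\in\mathbb N$, let $Z_k\subset\mathbb R^n$ be the set of initializations $x_0$ from which \emph{some} subgradient sequence with the prescribed step sizes reaches $S$ at step $k$. Then $Z_0=S$, and the null set we seek is $J:=\bigcup_{k\ge 0}Z_k$, a countable union; so it suffices to prove each $Z_k$ is Lebesgue null. Let $D\subset\mathbb R^n$ denote the dense open semialgebraic set on which $f$ is $C^2$; its complement is a lower-dimensional semialgebraic set, hence Lebesgue null. Writing $T_\alpha(x):=x-\alpha\nabla f(x)$ on $D$, we have the inclusion
\[
Z_{k+1}\subset T_{\alpha_0}^{-1}(\widetilde Z_k)\cup(\mathbb R^n\setminus D),
\]
where $\widetilde Z_k$ is the analogue of $Z_k$ for the shifted step-size sequence $(\alpha_1,\alpha_2,\ldots)$. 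Since $\mathbb R^n\setminus D$ is null, the crux is to show that $T_{\alpha_0}$ sends null sets to null sets under preimage.

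For this I would invoke an area-formula argument: for a locally Lipschitz map, preimages of Lebesgue null sets are Lebesgue null whenever the Jacobian is nonzero a.e. The Jacobian of $T_\alpha$ on $D$ equals $\det(I-\alpha\nabla^2 f(x))$, so one needs the set of resonant step sizes
\[
F:=\Bigl\{\alpha\in(0,\infty)\;:\;\mathrm{Leb}_n\bigl(\{x\in D:\det(I-\alpha\nabla^2 f(x))=0\}\bigr)>0\Bigr\}
\]
to be finite. The set $E:=\{(x,\alpha)\in D\times(0,\infty):\det(I-\alpha\nabla^2 f(x))=0\}$ is semialgebraic (differentiability and Hessians of semialgebraic functions inherit semialgebraicity), and for each $x\in D$ its $\alpha$-slice is finite (at most the reciprocals of the nonzero eigenvalues of $\nabla^2 f(x)$). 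By Fubini, $E$ has Lebesgue measure zero in $D\times(0,\infty)$, whence $F$ has Lebesgue measure zero in $\mathbb R$. If one also knows $F$ is semialgebraic, then the classical fact that a semialgebraic Lebesgue-null subset of $\mathbb R$ is a finite union of points yields $F=\{a_1,\ldots,a_m\}$. Given this finite $F$ and step sizes $\alpha_k\notin F$, a backward induction using the displayed inclusion shows each $Z_k$ is null, and hence $J$ is null.

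The main obstacle I anticipate is justifying that $F$ itself is semialgebraic. The predicate ``the $\alpha$-slice has positive $n$-dimensional Lebesgue measure'' is not first-order expressible in the theory of real closed fields, so one cannot appeal to Tarski--Seidenberg directly. The cleanest resolution is to replace positive Lebesgue measure by \emph{semialgebraic dimension} $n$: for a semialgebraic subset of $\mathbb R^n$ these two properties coincide, and having dimension $n$ is first-order definable via cell decomposition. Alternatively, one may pre-stratify $D$ into finitely many semialgebraic strata on which $\nabla^2 f$ is continuous and the characteristic polynomial has fixed factorization type, so that the $\alpha$-slice on each stratum is a discrete set of values moving semialgebraically with $x$, reducing finiteness of $F$ to a stratum-by-stratum Sard-type argument. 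Once $F$ is secured, everything else is routine area-formula bookkeeping, and the conclusion follows.
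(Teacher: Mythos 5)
The paper omits this proof, deferring to \cite[Lemma 5.9]{lai2024stability}, and your argument reconstructs essentially that standard route: the exceptional step sizes $F$ arise from the spectrum of $\nabla^2 f$ on the dense open semialgebraic set $D$ where $f$ is $C^2$, preimages of null sets under $x\mapsto x-\alpha\nabla f(x)$ are controlled by the area formula since the Jacobian $\det(I-\alpha\nabla^2 f)$ is nonzero a.e.\ for $\alpha\notin F$, and a backward recursion over the iteration index assembles the null set $J$ as a countable union. The one point you flag as delicate—definability of $F$—is resolved exactly as you suggest: for semialgebraic subsets of $\mathbb{R}^n$, positive Lebesgue measure is equivalent to having dimension $n$, which is first-order expressible, so $F$ is a semialgebraic null subset of $\mathbb{R}$ and hence finite; the proof is complete.
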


To prove Theorem \ref{thm:sym_avoid}, we consider the set of points where $f$ is nonsmooth, namely, \[P := \{x \in   \mathbb{R}^n:x_i x_j = u_i u_j~~\text{for some} ~~i,j\in I\}.\] Clearly \(P\) is a null set. By Lemma \ref{lemma:avoidance}, there exists $\bar{\alpha}>0$ such that for any \((\alpha_k)_{k\in \mathbb{N}}\subset (0,\bar{\alpha}]\), there exists a null set \(J\subset \mathbb R^n\) such that any subgradient sequence $(x_k)_{k\in \mathbb N}$ with step sizes $(\alpha_k)_{k\in \mathbb N}$ and initialized outside $J$ satisfies $x_k\not\in P\cup A$ for all $k\in \mathbb N$. Thus, any such subgradient sequence follows
\begin{equation}\label{eq:update_simplified}
    x_{k+1}= x_k - \alpha_k\partial f(x_k) = x_k - \alpha_k\nabla f(x_k)~~,~~\forall k\in \mathbb{N}.
\end{equation}

Fix any step sizes  \((\alpha_k)_{k\in \mathbb{N}}\subset (0,\bar{\alpha}]\) and the corresponding null set $J\subset \mathbb R^n$. We next prove Theorem \ref{thm:sym_avoid} by contradiction and let \((x_k)_{k \in \mathbb{N}}\) be a subgradient sequence with step sizes \((\alpha_k)_{k\in \mathbb{N}}\) such that $(x_k)_{k\in \mathbb N}\subset \mathbb R^n\setminus P$ and \(x_k \to x^* \in A\). Consider the following partition of indices \(S_+ := \{i \in I : x^*_i = u_i\}\), \(S_- := \{i \in I : x^*_i = -u_i\}\), and $S_0:= I \setminus (S_+ \cup S_-)$. 
If $S_+ = S_- = \emptyset$, then $|x^*_i|<|u_i|$ for all $i\in I$. Thus, $f$ agrees with the (smooth) quadratic $x\mapsto \frac{1}{2}\sum_{i = 1}^n\sum_{j = 1}^n \mathrm{sign}(-u_i u_j) (x_ix_j - u_i u_j)$ near $x^*$, for which $x^*$ is a maximizer. By the definition of $J$, $x_k \ne x^*$ for all $k$, thus $x_k\not\to x^*$ after possibly reducing $\bar{\alpha}$. 

We next consider the case where \(S_+ \cup S_- \neq \emptyset \). Let $\epsilon>0$ so that for all $x\in B(x^*,\epsilon)$, the following holds:
\begin{enumerate}
    \item $\mathrm{sign}(x_i) = \mathrm{sign}(u_i)$ for all $i\in S_+$;
    \item $\mathrm{sign}(x_i) =- \mathrm{sign}(u_i)$ for all $i\in S_-$;
    \item $\mathrm{sign}(x_i x_j - u_i u _j) =- \mathrm{sign}(u_i u_j)$ for all $i\in I$ and $j\in S_0$.
\end{enumerate}
Indeed, such $\epsilon$ exists by the definition of $S_\pm$, and the fact that $|x_j^*|<|u_j|$ for all $j\in S_0$. Since \(x_k \to x^*\), there exists \(N_1\in \mathbb N\) such that \(x_k \in B(x^*,\epsilon) \setminus P\) for all \(k \ge N_1\). By chain rule \cite[2.3.10 Chain Rule II]{clarke1990optimization}, The partial derivatives of $f$ are given by
\begin{equation}\label{eq:partial_exp}
    \partial_i f(x) = \sum_{j=1}^n\mathrm{sign}(x_ix_j-u_iu_j)x_j,
\end{equation}
where $\mathrm{sign}$ denotes the elementwise set-valued sign mapping defined by $\mathrm{sign}(t) := t/|t|$ if $t\neq 0$, and $\mathrm{sign}(0) := \left[-1,1\right]$. The following lemma concerns the partial derivatives of $f$ near $x^*$, and is essential for the construction of monotone quantities for the proof of Theorem \ref{thm:sym_avoid}.
\begin{lemma}\label{lemma:partial}
 There exists $\delta>0$ such that for any \(x \in B(x^*,\epsilon) \setminus P\), \(I_+ \subset I \setminus S_-\), and \(I_- \subset I \setminus S_+\), we have
    \[
    \frac{1}{|I_+|}\sum_{i_+ \in I_+}\mathrm{sign}(u_{i_+})\partial_{i_+} f(x) - \frac{1}{|I_-|}\sum_{i_- \in I_-}\mathrm{sign}(u_{i_-})\partial_{i_-} f(x) \geq 0.
    \]
    Furthermore, it holds that either \(|x_i| <|u_i|\) for all \(i \in I_+\cup I_-\) or
    \[
    \frac{1}{|I_+|}\sum_{i_+ \in I_+}\mathrm{sign}(u_{i_+})\partial_{i_+} f(x) - \frac{1}{|I_-|}\sum_{i_- \in I_-}\mathrm{sign}(u_{i_-})\partial_{i_-} f(x) \geq \delta.
    \]
\end{lemma}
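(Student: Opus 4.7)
The plan is to expand $\mathrm{sign}(u_i)\partial_i f(x)$ from \eqref{eq:partial_exp} using conditions 1--3, then to observe that the contributions from indices $j\in S_0$ are independent of $i$ and therefore cancel when we subtract the $I_+$- and $I_-$-averages. Fix $x\in B(x^*,\epsilon)\setminus P$. Condition 3 (applied directly when $j\in S_0$, and via the symmetry of $x_ix_j-u_iu_j$ in $i,j$ when $i\in S_0$) gives, for all $i\in I$ and $j\in S_0$,
\[
\mathrm{sign}(u_i)\,\mathrm{sign}(x_ix_j-u_iu_j)\,x_j \;=\; -\mathrm{sign}(u_i)\mathrm{sign}(u_iu_j)\,x_j \;=\; -\mathrm{sign}(u_j)\,x_j,
\]
which does not depend on $i$. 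Summing over $j\in S_0$ yields an $i$-free constant that drops out of the prescribed difference.

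For the remaining block $j\in S_+\cup S_-$, I would split by whether $i\in S_+$, $S_-$, or $S_0$. Conditions 1--2 give $\mathrm{sign}(x_j)=\pm\mathrm{sign}(u_j)$ according as $j\in S_+$ or $j\in S_-$. When $x_ix_j$ and $u_iu_j$ have opposite signs (the case whenever $\{i,j\}$ meets both $S_+$ and $S_-$, or whenever $i\in S_0$ by condition 3), one has $\mathrm{sign}(x_ix_j-u_iu_j)=-\mathrm{sign}(u_iu_j)$; when they have the same sign ($i,j\in S_+$ or $i,j\in S_-$) one has $\mathrm{sign}(x_ix_j-u_iu_j)=\mathrm{sign}(u_iu_j)\,\mathrm{sign}(|x_ix_j|-|u_iu_j|)$. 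A short case table then shows that $\mathrm{sign}(u_i)\mathrm{sign}(x_ix_j-u_iu_j)x_j$ equals $\mathrm{sign}(|x_ix_j|-|u_iu_j|)|x_j|$ when $i,j\in S_+$, $-\mathrm{sign}(|x_ix_j|-|u_iu_j|)|x_j|$ when $i,j\in S_-$, and an $\pm|x_j|$ (depending only on the $S_\pm$-membership of $i,j$) in every other case. Writing $p:=|I_+\cap S_+|/|I_+|$ and $q:=|I_-\cap S_-|/|I_-|$ and assembling, the cross contributions telescope so that
\[
\tfrac{1}{|I_+|}\!\sum_{i\in I_+}\!\mathrm{sign}(u_i)\partial_if(x)\,-\,\tfrac{1}{|I_-|}\!\sum_{i\in I_-}\!\mathrm{sign}(u_i)\partial_if(x)\;=\;R_++R_-,
\]
where $R_+:=\tfrac{1}{|I_+|}\sum_{i\in I_+\cap S_+}\sum_{j\in S_+}\bigl(\mathrm{sign}(|x_ix_j|-|u_iu_j|)+1\bigr)|x_j|$ and $R_-$ is the analogous sum over $(I_-\cap S_-)\times S_-$. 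Each summand is nonnegative, giving the first inequality.

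For the quantitative refinement, condition 3 with $i=j\in S_0$ forces $|x_j|<|u_j|$ on $B(x^*,\epsilon)\setminus P$, so any $i_0\in I_+\cup I_-$ with $|x_{i_0}|\ge|u_{i_0}|$ must lie in $(I_+\cap S_+)\cup(I_-\cap S_-)$. Evaluating $R_+$ or $R_-$ at the diagonal pair $(i,j)=(i_0,i_0)$, the fact that $x\notin P$ and $|x_{i_0}|\ge|u_{i_0}|>0$ gives $\mathrm{sign}(|x_{i_0}|^2-|u_{i_0}|^2)=+1$, so the corresponding summand equals $2|x_{i_0}|/|I_\pm|$. Combined with the nonnegativity of the remaining summands, $|I_\pm|\le n$, and $|x_{i_0}|\ge\min_i|u_i|>0$, this yields the lower bound with $\delta:=2\min_i|u_i|/n$.

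The hardest part is arguably the bookkeeping in the middle paragraph: one has to verify that the six contributions arising from the $(S_+\cup S_-\cup S_0)\times(S_+\cup S_-)$ decomposition, together with the weights $p,q$, combine so that the cross and $S_0$-indexed pieces convert the raw lower bounds $-p\sum_{j\in S_+}|x_j|$ and $-q\sum_{j\in S_-}|x_j|$ on the two sign sums exactly into the $(\mathrm{sign}(\cdot)+1)$ form of $R_++R_-$. No analytic tool beyond conditions 1--3 and the fact that $x\notin P$ enters the argument.
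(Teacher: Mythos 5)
Your proposal is correct and follows essentially the same route as the paper's proof: expand the partial derivatives via the sign conditions near $x^*$, observe that the $S_0$-indexed contributions are $i$-independent and cancel in the difference of averages, reduce to a sum of nonnegative terms supported on $(I_+\cap S_+)\times S_+$ and $(I_-\cap S_-)\times S_-$, and extract the quantitative bound $\delta=2\min_{i\in I}|u_i|/n$ from the diagonal term using $x\notin P$. The only difference is cosmetic: the paper keeps all $i\in I_+$ (resp.\ $I_-$) in the final double sum and bounds each summand by $|{\pm}\mathrm{sign}(\cdot)x_j|\le|x_j|$, whereas you explicitly identify the vanishing terms and rewrite the rest in the $(\mathrm{sign}(|x_ix_j|-|u_iu_j|)+1)|x_j|$ form.
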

\begin{proof}
    Fix \(x \in B(x^*,\epsilon) \setminus P\). For any $i,j\in I$, we have $\mathrm{sign}(x_ix_j-u_iu_j) = -\mathrm{sign}(u_iu_j)$ unless either $\{i,j\}\in S_+$ or $\{i,j\}\in S_-$, following from the definition of $\epsilon$. Thus, for any $i_+ \in I \setminus S_-$, the $i_+$-th partial derivative can be expanded from \eqref{eq:partial_exp} as
    \begin{equation}\label{eq:i_+}
        \begin{aligned}
          \partial_{i_+} f(x) &=  \sum_{j\in S_+}\mathrm{sign}(x_{i_+}x_j-u_{i_+}u_j)x_j + \sum_{j\in S_-\cup S_0}\mathrm{sign}(x_{i_+}x_j-u_{i_+}u_j)x_j\\
         &= \sum_{j\in S_+}\mathrm{sign}(x_{i_+}x_j-u_{i_+}u_j)x_j -\sum_{j\in S_-\cup S_0}\mathrm{sign}(u_{i_+}u_j)x_j\\
         &= \sum_{j\in S_+}\mathrm{sign}(x_{i_+}x_j-u_{i_+}u_j)x_j + \mathrm{sign}(u_{i_+})\sum_{j\in S_-}|x_j|  - \mathrm{sign}(u_{i_+})\sum_{j\in S_0}\mathrm{sign}(u_j)x_j.           
        \end{aligned}
    \end{equation}
Similarly, for any $i_- \in I \setminus S_+$, it holds that
\begin{equation}\label{eq:i_-}
    \partial_{i_-} f(x) = \sum_{j\in S_-}\mathrm{sign}(x_{i_-}x_j-u_{i_-}u_j)x_j - \mathrm{sign}(u_{i_-})\sum_{j\in S_+}|x_j|  - \mathrm{sign}(u_{i_-})\sum_{j\in S_0}\mathrm{sign}(u_j)x_j.
\end{equation}
Now fix \(I_+ \subset I \setminus S_-\) and \(I_- \subset I \setminus S_+\). Combining \eqref{eq:i_+} and \eqref{eq:i_-}, We have
 \begin{align*}
     &~\frac{1}{|I_+|}\sum_{i_+ \in I_+}\mathrm{sign}(u_{i_+})\partial_{i_+} f(x) - \frac{1}{|I_-|}\sum_{i_- \in I_-}\mathrm{sign}(u_{i_-})\partial_{i_-} f(x)\\
     =&~\frac{1}{|I_+|}\sum_{i_+ \in I_+}\sum_{j\in S_+}\mathrm{sign}(u_{i_+})\mathrm{sign}(x_{i_+}x_j-u_{i_+}u_j)x_j + \sum_{j\in S_-}|x_j| - \sum_{j\in S_0}\mathrm{sign}(u_j)x_j + \cdots\\
     &-\frac{1}{|I_-|}\sum_{i_- \in I_-}\sum_{j\in S_-}\mathrm{sign}(u_{i_-})\mathrm{sign}(x_{i_-}x_j-u_{i_-}u_j)x_j + \sum_{j\in S_+}|x_j| + \sum_{j\in S_0}\mathrm{sign}(u_j)x_j\\
     =&~\frac{1}{|I_+|}\sum_{i_+ \in I_+}\sum_{j\in S_+}\left(\mathrm{sign}(u_{i_+})\mathrm{sign}(x_{i_+}x_j-u_{i_+}u_j)x_j + |x_j|\right) + \cdots\\
     &+\frac{1}{|I_-|}\sum_{i_- \in I_-}\sum_{j\in S_-}\left(-\mathrm{sign}(u_{i_-})\mathrm{sign}(x_{i_-}x_j-u_{i_-}u_j)x_j + |x_j|\right) \ge 0.
 \end{align*}
This proves the first part of the lemma. To prove the second part, assume that $|x_i|>|u_i|$ for some $i\in I_+\cup I_-$. We will prove the case where $i\in I_+$ and the case where $i\in I_-$ follows verbatim. Since $|x_i|>|u_i|$ with $i\in I_+ \subset I \setminus S_-$, we have $i\in S_+$. Thus,
\begin{align*}
\frac{1}{|I_+|}\sum_{i_+ \in I_+}\sum_{j\in S_+}\left(\mathrm{sign}(u_{i_+})\mathrm{sign}(x_{i_+}x_j-u_{i_+}u_j)x_j + |x_j|\right) &\ge \frac{1}{|I_+|} \left(\mathrm{sign}(u_{i})\mathrm{sign}(x_{i}^2-u_{i}^2)x_i + |x_i|\right)\\
&\ge \frac{2|u_i|}{|I_+|}.
\end{align*}
This proves the second part with $\delta:= 2\min_{i\in I}|u_i|/n>0$.
\end{proof}
Now we continue to prove Theorem 3.1. Consider the following two cases.

\emph{Case 1: both \(S_+\) and  \(S_-\) are nonempty.}
Consider \(g:\mathbb{R}^n \to \mathbb{R}\) defined by
\[
g(x) := \frac{1}{|S_+|}\sum_{i_+ \in S_+}\mathrm{sign}(u_{i_+})x_{i_+}-\frac{1}{|S_-|}\sum_{i_- \in S_-}\mathrm{sign}(u_{i_-})x_{i_-}.
\]
Recall that $x_k\in B(x^*,\epsilon)\setminus P$ for all \(k \ge N_1\). By \eqref{eq:update_simplified}, we have
\begin{small}
 \begin{align*}
    g(x_{k})-g(x_{k+1}) &= \frac{1}{|S_+|}\sum_{i_+ \in S_+}\mathrm{sign}(u_{i_+})(x_{k,i_+}-x_{k+1,i_+})-\frac{1}{|S_-|}\sum_{i_- \in S_-}\mathrm{sign}(u_{i_-})(x_{k,i_-}-x_{k+1,i_-})
    \\ & = \alpha_k\left(\frac{1}{|S_+|}\sum_{i_+ \in S_+}\mathrm{sign}(u_{i_+})\partial_{i_+}f(x_k)-\frac{1}{|S_-|}\sum_{i_- \in S_-}\mathrm{sign}(u_{i_-})\partial_{i_-}f(x_k)\right).
\end{align*}   
\end{small}
By Lemma \ref{lemma:partial}, we know that \(g(x_{k}) \geq g(x_{k+1}) \), and since \(g\) is a linear functional, we know that \(\lim_{k\to \infty}g(x_k) = g(x^*)\). Therefore, for any \(k \ge N_1\), \( g(x_k) \geq g(x^*)\). We next show that
\begin{equation}\label{eq:g_dif}
    g(x_{k}) - g(x_{k+1}) \geq \alpha_k \delta 
\end{equation}
for all $k \ge N_1$. Assume the contrary that above inequality is violated by some $k \ge N_1$, then again by Lemma \ref{lemma:partial}, it holds that \(|x_{k,i}| < |u_i|  = |x^*_i|\) for all \(i \in S_+ \cup S_-\). Thus,
\begin{align*}
    g(x_k) - g(x^*) &= \frac{1}{|S_+|}\sum_{i_+ \in S_+}\mathrm{sign}(u_{i_+})(x_{k,i_+}-x^*_{i_+})-\frac{1}{|S_-|}\sum_{i_- \in S_-}\mathrm{sign}(u_{i_-})(x_{k,i_-}-x^*_{i_-}) 
    \\& = -\frac{1}{|S_+|}\sum_{i_+ \in S_+}|x_{k,i_+}-x^*_{i_+}|-\frac{1}{|S_-|}\sum_{i_- \in S_-}|x_{k,i_-}-x^*_{i_-}|<0.
\end{align*}
This is in contradiction with $g(x_k)\ge g(x^*)$, so \eqref{eq:g_dif} holds. Since \((\alpha_k)_{k\in \mathbb{N}}\) is nonsummable, we have
\[g(x_{N_1}) - g(x_K) = \sum_{k = N_1}^{K-1} g(x_{k}) - g(x_{k+1}) \ge \sum_{k = N_1}^{K-1} \alpha_k \delta \to \infty\]
as $K\to \infty$. This contradicts with $g(x_k)\to g(x^*)$. Therefore, $x_k \not\to x^*$.

\emph{Case 2: There is exactly one empty set among \(S_+\) and \(S_-\).} 
By symmetry, we assume \(S_+ \neq \emptyset\)  and \(|S_-| = \emptyset\) without loss of generality. When \(k \ge N_1\), for any \(j \notin S_+\), \(|x_{k,j}| < |u_j|\). Similar to the previous case, consider \(g_j: \mathbb{R}^n \to \mathbb{R}\) defined by
\[
g_j (x) := \frac{1}{|S_+|}\sum_{i_+ \in S_+}\mathrm{sign}(u_{i_+})x_{i_+}-\mathrm{sign}(u_{j})x_{j}.
\]
We have
\begin{align*}
    g_j(x_{k})-g_j(x_{k+1}) &= \frac{1}{|S_+|}\sum_{i_+ \in S_+}\mathrm{sign}(u_{i_+})(x_{k,i_+}-x_{k+1,i_+})-\mathrm{sign}(u_{j})(x_{k,j}-x_{k+1,j})
    \\ & = \alpha_k \left(\frac{1}{|S_+|}\sum_{i_+ \in S_+}\mathrm{sign}(u_{i_+})\partial_{i_+}f(x_k)-\mathrm{sign}(u_{j})\partial_{j}f(x_k)\right).
\end{align*}
By Lemma \ref{lemma:partial}, we know that \(g_j(x_{k}) \geq g_j(x_{k+1}) \) and \( g_j(x_k) \geq g_j(x^*)\) for all $k\ge N_1$ and $j\not\in S_+$. Furthermore, since \((\alpha_k)_{k \in \mathbb{N}}\) is nonsummable, by applying arguments similar to the previous case to the quantity \(\sum_{j \in I\setminus S_+}g_j(x_k)\), there exists \(N_2 > N_1\) such that \(g_j(x_{N_2})=g_j(x_{N_2+1})\) for all \(j \notin S_+\).  Again by Lemma \ref{lemma:partial}, we have \(|x_{N_2,i_+}| <|u_{i_+}|=|x^*_{i+}|\).

We next prove by induction that for any \(k' \geq N_2\) and \(i_+ \in S_+\),  \(|x_{k'+1,i_+} | < |x_{k',i_+} |< |u_{i_+}| = |x^*_{i_+}|\), which clearly contradicts with $x_k\to x^*$. Suppose for some \(k' \geq N_2\), we have \(|x_{k',i_+} |< |x^*_{i_+}|\) for all \(i_+ \in S_+\). Then, for all \(i_+ \in S_+\) and any \(i \in I\), we have 
\[
\mathrm{sign}(x_{k',i_+}x_{k',i} - u_{i_+}u_i)= \mathrm{sign}(- u_{i_+}u_i) = -\mathrm{sign}(u_{i_+})\mathrm{sign}(u_i),
\]
where we used the fact that $|x_{k',i}|<|u_i|$ for all $i\not\in S_+$, by the definition of $\epsilon$ and the fact that $x_{k'}\in B(x^*,\epsilon)$. Thus,
\begin{align*}
    |x_{k'+1,i_+}| - |x_{k',i_+}|&= \mathrm{sign}(x_{k'+1,i_+}) x_{k'+1,i_+} - \mathrm{sign}(x_{k',i_+}) x_{k',i_+}\\
    &= \mathrm{sign}(u_{i_+})(x_{k'+1,i_+} - x_{k',i_+})\\
    & =  - \mathrm{sign}(u_{i_+})\alpha_{k'} \partial_{i_+}f(x_{k'})\\ 
    & = - \alpha_{k'} \mathrm{sign}(u_{i_+})\sum_{i=1}^n\mathrm{sign}(x_{k',i_+}x_{k',i} - u_{i_+}u_i)x_{k',i}
    \\ &= \alpha_{k'} \sum_{i=1}^n\mathrm{sign}(u_i)x_{k',i}.
\end{align*}
It remains to prove that $\sum_{i=1}^n\mathrm{sign}(u_i)x_{k',i}<0$. Recall that $x^*\in A$, so \(\sum_{i=1}^n\mathrm{sign}(u_i)x^*_{i} = 0\). Therefore,
\begin{align}
    \sum_{i=1}^n  \mathrm{sign}(u_i)x_{k',i} &= \sum_{i=1}^n  \mathrm{sign}(u_i) (x_{k',i} - x^*_{i})\nonumber
    \\& = \sum_{i_+ \in S_+}  \mathrm{sign}(u_i)(x_{k',i_+}- x^*_{i_+}) + \sum_{j \notin S_+}  \mathrm{sign}(u_j)(x_{k',j}-x^*_j). \label{eq:ux_split}
\end{align}
For the first term in \eqref{eq:ux_split}, we have
\begin{equation*}
    \sum_{i_+ \in S_+}\mathrm{sign}(u_{i_+})(x_{k',i_+}-x^*_{i_+}) = -\sum_{i_+ \in S_+}|x_{k',i_+}-x^*_{i_+}| < 0
\end{equation*}
To show that the second term in \eqref{eq:ux_split} is negative as well, note that
\begin{align*}
    0 \leq g_j(x_{k'})-g_j(x^*) = \frac{1}{|S_+|}\sum_{i_+ \in S_+}\mathrm{sign}(u_{i_+})(x_{k',i_+}-x^*_{i_+})-\mathrm{sign}(u_{j})(x_{k',j}-x^*_{j})
\end{align*}
for all \(j \notin S_+\). Thus,
\begin{equation*}
     \mathrm{sign}(u_{j})(x_{k',j}-x^*_{j}) \le \frac{1}{|S_+|}\sum_{i_+ \in S_+}\mathrm{sign}(u_{i_+})(x_{k',i_+}-x^*_{i_+})<0.
\end{equation*}
Therefore, $|x_{k'+1,i_+} | < |x_{k',i_+} |< |u_{i_+}| = |x_{i_+}^*|$. This completes the induction and the proof of Theorem \ref{thm:sym_avoid}. \qed

\subsection{Proof of Proposition \ref{prop:u_i_zero}}\label{sec:proof_u_i_zero}
We prove the proposition by showing that any such subgradient sequence $(x_k)_{k\in \mathbb N}$ stays in \(T\) by induction. Assume that \(x_k \in T\). We have
    \begin{align*}
        x_{k+1,1} & \in x_{k,1}- \alpha_k \partial_1 f(x_k)
        \\& = x_{k,1}- \alpha_k \sum_{j=1}^n \text{sign}(x_{k,1} x_{k,j})x_{k,j}
        \\& = x_{k,1}-\alpha_k \text{sign}(x_{k,1}) \sum_{j=1}^n |x_{k,j}|.
    \end{align*}
    Since \(x_k \in T\), we have
    \begin{equation}\label{eq:contraction}
    |x_{k,1}|\left(1-\alpha_k \left(1+\frac{n-1}{n+1}\right)\right) < |x_{k+1,1}| \leq |x_{k,1}|(1-\alpha_k).        
    \end{equation}
    For \(i\in I \setminus \{1\}\), we have
    \begin{align*}
        x_{k+1,i} & \in x_{k,i}- \alpha_k \partial_i f(x_k)
        \\ &= x_{k,i}- \alpha_k \sum_{j=1}^n \text{sign}(x_{k,i} x_{k,j}-u_i u_j) x_{k,j}.
        \\& = x_{k,i} - \alpha_k (\text{sign}(x_{k,i})|x_{k,1}|+\sum_{j=2}^n \text{sign}(x_{k,i} x_{k,j}-u_i u_j) x_{k,j}).
    \end{align*}
    Since \(x_k \in T\), we have
    \[|x_{k+1,i}| \leq \max\left\{|x_{k,i}| - \alpha_k |x_{k,1}|+\alpha_k \frac{n-1}{n+1}|x_{k,1}|,\alpha_k |x_{k,1}|- |x_{k,i}| +\alpha_k \frac{n-1}{n+1}|x_{k,1}|\right\}.\]
    Note that
    \begin{align*}
        &\frac{|x_{k+1,1}|}{n+1}-\left(|x_{k,i}| - \alpha_k |x_{k,1}|+\alpha_k \frac{n-1}{n+1}|x_{k,1}|\right) \\
        >~&\frac{1}{n+1}|x_{k,1}|\left(1-\alpha_k\left(1+\frac{n-1}{n+1}\right)\right) - |x_{k,1}|\left(\frac{1}{n+1}- \alpha_k  \frac{2}{n+1}\right)
        \\
        =~& \frac{2\alpha_k |x_{k,1}| }{(n+1)^2} > 0,
    \end{align*}
    and 
    \begin{align*}
        &\frac{|x_{k+1,1}|}{n+1}-\left(\alpha_k |x_{k,1}|- |x_{k,i}| +\alpha_k \frac{n-1}{n+1}|x_{k,1}|\right)\\
        >~& |x_{k,1}|\left(1-\alpha_k\left(1+\frac{n-1}{n+1}\right)\right) -\alpha_k \frac{2n}{n+1}|x_{k,1}|
        \\=~& |x_{k,1}|\left(1-\alpha_k \frac{4n}{n+1}\right)
        \\\geq~& \frac{|x_{k,1}|}{n+1} > 0.
    \end{align*}
    Therefore, \(|x_{k+1,i}|<\frac{|x_{k+1,1}|}{n+1}\) and thus \(x_{k+1} \in T\). This completes the induction, and we have $x_k\in T$ for all $k\in \mathbb N$.
    
    To show that $x_k\to 0$, by \eqref{eq:contraction}, we have \[
    |x_{k,1}| \leq |x_{0,1}|\prod_{l=0}^{k-1}(1-\alpha_l)\to 0 
    \]
    as \((\alpha_k)_{k \in \mathbb{N}}\) is nonsummable. Since \(x_k\in T\) for all \(k \in \mathbb{N}\), we can conclude that \((x_k)_{k \in \mathbb{N}}\) converges to \(0\). \qed

\appendix
\section{Omitted proofs}
\subsection{Proof of Proposition \ref{prop:phase}}\label{sec:proof_phase}
For each $i = 1,\ldots,N$, denote by $A_i := a_i a_i^\top$, which is a positive semidefinite matrix. Define the subspace:
\[
V = \mathrm{span}\{ a_1, a_2, \dots, a_N \} \subset\mathbb{R}^d\]
Let $V^\perp$ be the orthogonal complement of $V$. By \cite[2.3.10 Chain Rule II]{clarke1990optimization}, the subdifferential of $f$ is given by
\[\partial f(x) = \frac{1}{N}\sum_{i = 1}^N \langle a_i,x\rangle\mathrm{sign}(|\langle a_i,x\rangle|^2 - b_i)a_i\subset V.\]

Let $x:[0,\infty)\to\mathbb R^{n}$ be a subgradient trajectory of $f$, then it satisfies
\[
x'(t) \in -\partial f(x(t)) \subset -V = V, \quad \text{for a.e. } t>0.
\]
Decompose the trajectory as:
\[
x(t) = x_V(t) + x_{V^\perp}(t),
\]
where $x_V(t) \in V$ and $x_{V^\perp}(t) \in V^\perp$. Since $x'(t) \in V$ for almost every $t$, we have:
\[
x_{V^\perp}'(t) = 0 \implies x_{V^\perp}(t) = x_{V^\perp}(0)\text{ for all } t>0.
\]
Thus, \(x_{V^\perp}(t)\) is a constant and $x(t) = x_V(t) + x_{V^\perp}(0)$. It remains to prove that $x_V(\cdot)$ is bounded. By Lemma \ref{lemma:chain}, the function value is nonincreasing along any subgradient trajectory. Thus,
\[
f(x(t)) \leq f(x(0)) \quad \forall t \geq 0.
\]
Let $C_0 = f(x(0))$. From the definition of $f$ \eqref{eq:phase}, we have
\[
f(x) = \frac{1}{N} \sum_{i=1}^N \left| |\langle a_i,x \rangle|^2 - b_i \right| \geq \frac{1}{N} \sum_{i=1}^N \left( |\langle a_i,x \rangle|^2 - |b_i| \right) = \frac{1}{N} x^\top B x - \frac{1}{N} \sum_{i=1}^N |b_i|,
\]
where $B = \sum_{i=1}^N a_i a_i^\top = \sum_{i=1}^N A_i$. Combining with $f(x(t)) \leq C_0$, we have
\[
x(t)^\top B x(t) \leq N C_0 + \sum_{i=1}^N |b_i| := C_1.
\]
Since $B = \sum_{i = 1}^N a_i a_i^\top$, $\mathrm{Null}(B) = V^\perp$. On $V$, $B$ is positive definite. Thus, there exists $\lambda^+ > 0$ such that
    \[
   x(t)^\top B x(t) =  x_V(t)^\top B x_V(t) \geq \lambda^+ \|x_V(t)\|^2
    \]
Then,
    \[
    \lambda^+ \|x_V(t)\|^2 \leq C_1 \implies \|x_V(t)\|^2 \leq \frac{C_1}{\lambda^+}
    \]
Therefore, the subgradient trajectory $x(\cdot)$ is bounded. \qed
\subsection{Proof of Proposition \ref{prop:sensing}} \label{sec:proof_sensing}
Let \( Z : [0, \infty) \to \mathbb{R}^{m \times r} \times \mathbb{R}^{n \times r} \) be a subgradient trajectory of $f$, which satisfies
\[
Z'(t) \in -\partial f(Z(t)), \quad \text{for almost every } t \geq 0, \quad \text{and } Z(0) = (X_0, Y_0).
\]
By \cite[2.3.10 Chain Rule II]{clarke1990optimization}, the subdifferential of $f$ is given by
\[\partial f(X,Y):= \left\{ \left(\frac{1}{N}\sum_{i=1}^Nc_iA_iY, \frac{1}{N}\sum_{i=1}^Nc_iA_i^\top X\right): c_i \in \mathrm{sign}(\langle A_i,XY^\top\rangle - b_i)\right\}.\]
Hence, with \( Z := (X, Y) \), for almost every \( t \geq 0 \) we have
\[X'(t) = -\frac{1}{N}\sum_{i=1}^Nc_iA_iY, \quad Y'(t) = -\frac{1}{N}\sum_{i=1}^Nc_iA_i^\top X,\quad\text{for some }c_i\in \mathrm{sign}(\langle A_i,X(t)Y(t)^\top\rangle - b_i).\]
Consider \( \phi : [0, \infty) \to \mathbb{R}^{r\times r} \) defined by \( \phi(t) := X(t)^\top X(t) - Y(t)Y(t)^\top \), whose derivative is given by

\begin{equation}
\phi'(t) = X'(t)^\top X(t) + X(t)^\top X'(t) - Y'(t)Y(t)^\top - Y(t)Y'(t)^\top = 0.
\end{equation}

Therefore, the continuous function \( \phi \) is constant on \([0, \infty)\). Thus, for  any $t\ge 0$, it holds that
\begin{align*}
c\|X(t)Y(t)^\top\|_F^2 &\le \frac{1}{N}\sum_{i=1}^N\langle A_i,X(t)Y(t)^\top\rangle^2\\
&\le \frac{1}{N}\left(\sum_{i=1}^N|\langle A_i,X(t)Y(t)^\top\rangle|\right)^{2} \\
&\leq \frac{1}{N}\left(\sum_{i=1}^N(|\langle A_i,X(t)Y(t)^\top\rangle-b_i|+|b_i|)\right)^{2} \\
&= \frac{1}{N}\left(\sum_{i=1}^N|\langle A_i,X(t)Y(t)^\top\rangle-b_i|+\sum_{i=1}^N |b_i|\right)^{2}\\
&=\frac{1}{N}\left(Nf(X(t),Y(t))+\sum_{i=1}^N |b_i|\right)^{2}\\
&\leq \frac{1}{N}\left(Nf(X(0),Y(0))+\sum_{i=1}^N |b_i|\right)^{2}=:c_1,
\end{align*}
where we used the fact that function values are positive and decrease along any subgradient trajectory of $f$ by Lemma \ref{lemma:chain}. Hence \(\|X(t)Y(t)^T\|_F^2 \leq c_2 := c_1/c\). Notice that
\begin{align*}
&\|X(t)^\top X(t)\|_F^2 + \|Y(t)^\top Y(t)\|_F^2 \\
= &\|X(t)^\top X(t) - Y(t)^\top Y(t)\|_F^2 + 2\|X(t)Y(t)^\top\|_F^2 \\
\leq& \|X(0)^\top X(0) - Y(0)^\top Y(0)\|_F^2 + 2c_2\\
=:&c_3
\end{align*}
By the Cauchy-Schwarz inequality,
\[
\|X(t)\|_F^4 + \|Y(t)\|_F^4 \leq \operatorname{rank}(X)\|X(t)^\top X(t)\|_F^2 + \operatorname{rank}(Y^\top)\|Y(t)^\top Y(t)\|_F^2 \leq (m + n + r)c_3.
\]
Thus, the subgradient trajectory $Z(\cdot)$ is bounded. \qed

\bibliographystyle{alpha}
\bibliography{mybib}
\end{document}